\documentclass[12pt]{article}

\usepackage{t1enc}
\usepackage{amsthm,amsmath,amssymb}
\usepackage{url}
\usepackage[colorlinks=true,citecolor=black,linkcolor=black,urlcolor=blue]{hyperref}

\newcommand{\higpig}{\mathcal{H}}
\newcommand{\field}{\mathbb{F}}
\newcommand{\prospace}{\mathbb{P}}
\newcommand{\ident}{\mathrm{id}}
\newcommand{\opp}{\mathrm{opp}}

\DeclareMathOperator{\PG}{\mathsf{PG}}

\DeclareMathOperator{\grass}{\mathbb{G}}
\DeclareMathOperator{\rank}{\mathrm{rank}}
\DeclareMathOperator{\mult}{\mathrm{mult}}
\DeclareMathOperator{\karak}{\mathrm{char}}

\theoremstyle{plain}
\newtheorem{theorem}{Theorem}
\newtheorem{lemma}[theorem]{Lemma}
\newtheorem{corollary}[theorem]{Corollary}
\newtheorem{proposition}[theorem]{Proposition}

\theoremstyle{definition}
\newtheorem{definition}[theorem]{Definition}

\newtheorem*{not*}{Notation}

\theoremstyle{remark}
\newtheorem{remark}[theorem]{Remark}


\title{\bf Higgledy-piggledy subspaces and~uniform subspace designs}

\author{
{Sz}abol{cs}~L.~Fan{cs}ali\thanks{This research was partially supported by the ECOST Action IC1104 and OTKA Grant~K81310}\\
\small MTA--ELTE Geometric~and~Algebraic~Combinatorics Research~Group\\[-0.8ex]
\small ELTE, Institute~of~Mathematics, Department~of~Geometry\\[-0.8ex]
\small Budapest, Hungary\\
\small\tt nudniq@cs.elte.hu\\
\and\addtocounter{footnote}{5}
P{\'e}ter~{Sz}iklai\thanks{This research was partially supported by the Bolyai Grant and OTKA Grant~K81310}\\
\small MTA--ELTE Geometric~and~Algebraic~Combinatorics Research~Group\\[-0.8ex]
\small ELTE, Institute~of~Mathematics, Department~of~Computer~Science\\[-0.8ex]
\small Budapest, Hungary\\
\small\tt sziklai@cs.elte.hu
}

\date{
\small September 22, 2014\\
\small Mathematics Subject Classifications: 05B25, 51E20, 51D20
}

\begin{document}


\maketitle


\begin{abstract}
In this article, we investigate collections of `well-spread-out' projective (and linear) subspaces. Projective $k$-subspaces in $\mathsf{PG}(d,\mathbb{F})$ are in `higgledy-piggledy arrangement' if they meet each projective subspace of co-dimension $k$ in a generator set of points. We prove that the set $\mathcal{H}$ of higgledy-piggledy $k$-subspaces has to contain \emph{more than} $\min\left\{|\mathbb{F}|,\sum_{i=0}^k\lfloor\frac{d-k+i}{i+1}\rfloor\right\}$ elements. We also prove that $\mathcal{H}$ has to contain \emph{more than} $(k+1)\cdot(d-k)$ elements if the field $\mathbb{F}$ is algebraically closed.

An $r$-uniform \emph{weak} $(s,A)$ subspace design is a set of linear subspaces $H_1,\dots,H_N\le\mathbb{F}^m$ each of rank $r$ such that each linear subspace $W\le\mathbb{F}^m$ of rank $s$ meets at most $A$ among them. This subspace design is an $r$-uniform \emph{strong} $(s,A)$ subspace design if $\sum_{i=1}^N\mathrm{rank}(H_i\cap W)\le A$ for $\forall W\le\mathbb{F}^m$ of rank $s$. We prove that if $m=r+s$ then the dual ($\{H_1^\bot,\dots,H_N^\bot\}$) of an $r$-uniform weak (strong) subspace design of parameter $(s,A)$ is an $s$-uniform weak (strong) subspace design of parameter $(r,A)$. We show the connection between uniform weak subspace designs and higgledy-piggledy subspaces proving that $A\ge\min\left\{|\mathbb{F}|,\sum_{i=0}^{r-1}\lfloor\frac{s+i}{i+1}\rfloor\right\}$ for $r$-uniform weak or strong $(s,A)$ subspace designs in $\mathbb{F}^{r+s}$.

We show that the $r$-uniform strong $(s,r\cdot s+\binom{r}{2})$ subspace design constructed by Guruswami and Kopprty (based on multiplicity codes) has parameter $A=r\cdot s$ if we consider it as a \emph{weak} subspace design. We give some similar constructions of weak and strong subspace designs (and higgledy-piggledy subspaces) and prove that the lower bound $(k+1)\cdot(d-k)+1$ over algebraically closed field is tight.
\end{abstract}


\section{Introduction}

In our previous article~\cite{higpig}, we examined sets $\mathcal{G}$ of points such that each hyperplane $\Pi$ is \emph{spanned} by the intersection $\Pi\cap\mathcal{G}$. Examination this question had been inspired by {H{\'e}ger, Patk{\'o}s and Tak{\'a}{ts}}~\cite{HTP}, who hunt for a set $\mathcal{G}$ of points in the projective space $\PG(d,q)$ that `determines' all hyperplanes in the sense that the intersection $\Pi\cap\mathcal{G}$ is \emph{individual} for each hyperplane $\Pi$.

A similar question is to find a set $\mathcal{G}$ of points such that each subspace $\Pi$ of co-dimension $k$ is spanned by the intersection $\Pi\cap\mathcal{G}$. For the sake of conciseness, a projective subspace of dimension $k$ will be called a projective $k$-subspace and a subspace of co-dimension $k$ will be called a co-$k$-subspace from now on.

\begin{definition}[Multiple $k$-blocking set]
A set $\mathcal{B}$ of points in the projective space $\PG(d,\field)$ is a $t$-fold \emph{blocking set} with respect to co-$k$-subspaces (briefly, a $t$-fold $k$-blocking set), if each projective subspace $\Pi<\PG(d,\field)$ of co-dimension $k$ meets $\mathcal{B}$ in at least $t$ points. A $t$-fold one-blocking set (i.e.~with respect to hyperplanes) is briefly said to be a $t$-fold blocking set.
\end{definition}

If $k>1$ then the definition of the $t$-fold $k$-blocking set does not say anything more about the intersections with the co-$k$-subspaces. In higher dimensions, a natural \emph{specialization} of multiple $k$-blocking sets would be the following.

\begin{definition}[$k$-generator set]
A set $\mathcal{G}$ of points in the projective space $\PG(d,\field)$ is a \emph{generator set} with respect to co-$k$-subspaces (or briefly, a \emph{$k$-generator set}), if each subspace $\Pi\subset\PG(d,\field)$ of co-dimension $k$ meets $\mathcal{G}$ in a `generator system' of $\Pi$, that is, $\mathcal{G}\cap\Pi$ spans $\Pi$, in other words this intersection is not contained in any hyperplane of $\Pi$. (Hyperplanes of co-$k$-subspaces are subspaces in $\PG(d,\field)$ of co-dimension $k+1$.)
\end{definition}

If the field $\field$ is finite then a $k$-generator set of points is finite, and so we can ask the minimal cardinality of a $k$-generator set as a combinatorial question. Since finitely many points could generate only finitely many subspaces, a $k$-generator set of points must be infinite if the field $\field$ is not finite. But it could be the union of finitely many geometric objects. Such type of $k$-generator sets well be investigated in the followings. Thus, we have combinatorial questions over arbitrary fields.

\subsection{Higgledy-piggledy subspaces}

{H{\'e}ger, Patk{\'o}s and Tak{\'a}{ts}}~\cite{HTP} had the idea to search generator set with respect to hyperplanes as the union of some disjoint projective lines. The generalization of this idea is to search generator set with respect to co-$k$-subspaces as the union of some (possibly disjoint) projective $k$-spaces. Note that the union of $t$ disjoint projective $k$-spaces is always a $t$-fold $k$-blocking set.

\begin{definition}[Higgledy-piggledy $k$-subspaces]
A set $\higpig$ of projective $k$-subspaces is a \emph{generator set} with respect to co-$k$-subspaces (briefly, a \emph{$k$-generator set} of $k$-subspaces), if the set $\bigcup\higpig$ of all points of the subspaces contained by $\higpig$ is a generator set with respect to co-$k$-subspaces. The elements of a \emph{$k$-generator set} of $k$-subspaces is said to be in \emph{higgledy-piggledy arrangement} and a $k$-generator set of $k$-subspaces is said to be a \emph{set of higgledy-piggledy $k$-subspaces}.
\end{definition}

The terminology `higgledy-piggledy arrangement' is introduced by {H{\'e}ger, Patk{\'o}s and Tak{\'a}{ts}}~\cite{HTP} in the case of `higgledy-piggledy lines'.

At first, we try to give another equivalent definition to the `higgledy-piggledy' property of $k$-generator sets of $k$-subspaces. The following is not an equivalent but a sufficient condition. Although, in several cases it is also a necessary condition (if we seek minimal such sets), thus, it could effectively be considered as an almost-equivalent.

\begin{theorem}[Sufficient condition]\label{thm:suff}
If there is no subspace of co-dimension $k+1$ meeting each element of the set $\higpig$ of $k$-subspaces then $\higpig$ is a generator set with respect to co-$k$-subspaces.
\end{theorem}
\begin{proof}
Suppose that the set $\higpig$ of $k$-subspaces is \emph{not} a generator set with respect to co-$k$-subspaces. Then there exists at least one co-$k$-subspace $\Pi$ that meets $\bigcup\higpig$ in a set $\Pi\cap\left(\bigcup\higpig\right)$ of points which is contained in a hyperplane $W$ of $\Pi$. Since $\Pi$ is of co-dimension $k$ it meets every projective $k$-subspace, thus each element of $\higpig$ meets $\Pi$, but the point(s) of intersection has (have) to be contained in $W$. Thus the subspace $W$ (of co-dimension $k+1$) meets each element of $\higpig$.
\end{proof}

The theorem above is a sufficient but not necessary condition. But if this condition above does not hold, then the set $\higpig$ of $k$-subspaces could only be a $k$-generator set in a very special way.

\begin{proposition}\label{prop:sundiszno}
If the set $\higpig$ of $k$-subspaces is a generator set with respect to co-$k$-subspaces of $\PG(d,\field)$ and there exists a subspace $W$ of co-dimension $k+1$ that meets each element of $\higpig$ then $\higpig$ has to contain at least as many elements as many points there are in a projective line. (That is, $|\higpig|\ge q+1$ if the field $\field=\field_q$ and $\higpig$ is infinite if the field $\field$ is \emph{not} finite.)
\end{proposition}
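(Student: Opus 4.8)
The plan is to project everything from the co-dimension $k+1$ subspace $W$ and reduce the statement to the elementary fact that a projective space cannot be covered by few proper subspaces. We may assume $d\ge k+1$ (otherwise there is no co-$(k+1)$-subspace at all) and $k\ge 1$ (for $k=0$ the hypothesis forces $\higpig\subseteq W$, contradicting that $\bigcup\higpig$ spans the whole space). Let $\overline{\PG}:=\PG(d,\field)/W$ be the quotient; since $W$ has co-dimension $k+1$, this is a projective space of dimension $k$, and the subspaces of $\PG(d,\field)$ containing $W$ correspond bijectively to the subspaces of $\overline{\PG}$. In particular the co-$k$-subspaces $\Pi$ with $W\subset\Pi$ — which are exactly the preimages of the points of $\overline{\PG}$ — are parametrised by the $\frac{|\field|^{k+1}-1}{|\field|-1}$ points of $\overline{\PG}$ (read as ``$\overline{\PG}$ is infinite'' when $\field$ is infinite). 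Write $\overline{\Pi}\in\overline{\PG}$ for the point corresponding to such a $\Pi$.

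Next I would record the key geometric dictionary. For $S\in\higpig$ let $\overline{S}$ denote the image of $S$ under the projection from $W$. By the hypothesis $S\cap W\ne\emptyset$, so a dimension count (using $\dim(S\cap W)\ge 0$, hence $\dim\langle S,W\rangle\le d-1$) gives $\dim\overline{S}\le k-1$; thus $\overline{S}$ is a \emph{proper} subspace of $\overline{\PG}$ (it is empty exactly when $S\subseteq W$). The crucial observation is: for a co-$k$-subspace $\Pi$ with $W\subset\Pi$, the subspace $S$ contains a point of $\Pi\setminus W$ if and only if $\overline{\Pi}\in\overline{S}$. Indeed, $\Pi$ is precisely the union of $W$ with the set of points mapping to $\overline{\Pi}$, so a point of $S$ lies in $\Pi\setminus W$ exactly when it is mapped to $\overline{\Pi}$, i.e.\ exactly when $\overline{\Pi}$ lies in the image $\overline{S}$ of $S$.

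Now I would combine this with the generator hypothesis. Since $W$ is a hyperplane of every co-$k$-subspace $\Pi$ with $W\subset\Pi$, and $\higpig$ is a generator set with respect to co-$k$-subspaces, the points of $\bigcup\higpig$ lying on such a $\Pi$ cannot all lie in $W$; hence some $S\in\higpig$ has a point in $\Pi\setminus W$, that is, $\overline{\Pi}\in\overline{S}$. Letting $\Pi$ range over all co-$k$-subspaces containing $W$, we conclude $\overline{\PG}=\bigcup_{S\in\higpig}\overline{S}$, a covering of the $k$-dimensional projective space $\overline{\PG}$ by the $|\higpig|$ proper subspaces $\overline{S}$ (empty ones included, causing no harm).

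Finally I would invoke the standard covering bounds. If $\field=\field_q$, every $\overline{S}$ has at most $\frac{q^{k}-1}{q-1}$ points while $\overline{\PG}$ has $\frac{q^{k+1}-1}{q-1}$ points, so $|\higpig|\cdot\frac{q^{k}-1}{q-1}\ge\frac{q^{k+1}-1}{q-1}$, and since $\frac{q^{k+1}-1}{q^{k}-1}>q$ this forces $|\higpig|\ge q+1$. If $\field$ is infinite, then $\overline{\PG}$ (equivalently the vector space $\field^{k+1}$) is not a finite union of proper subspaces, so $\higpig$ must be infinite. The step requiring the most care is the geometric dictionary of the second paragraph: making the bijection between subspaces through $W$ and subspaces of the quotient precise, deducing $\dim\overline{S}\le k-1$ from $S\cap W\ne\emptyset$, and checking the degenerate possibilities ($S\subseteq W$, or $S\cap W$ of larger dimension) do not break the covering argument; once that is in place the rest is routine counting.
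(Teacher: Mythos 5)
Your proof is correct and follows essentially the same route as the paper: both project from $W$ onto the factor geometry $\PG(d,\field)/W\cong\PG(k,\field)$, show that each element of $\higpig$ maps to a proper subspace of this quotient because it meets $W$, and use the generator property to conclude that these proper subspaces must cover every point of the quotient. The only cosmetic difference is the final step, where you count points directly against the size of a hyperplane of $\PG(k,q)$ (and invoke the fact that a vector space over an infinite field is not a finite union of proper subspaces), while the paper extends the images to hyperplanes and dualizes to quote the classical blocking-set bound; both give $|\higpig|\ge q+1$.
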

\begin{proof}
The points of the factor geometry $\PG(d,\field)/W\cong\PG(k,\field)$ are the co-$k$-subspaces of $\PG(d,\field)$ containing $W$. Let $H_i\in\higpig$ a $k$-subspace and consider the projective subspace $H_i\vee W$ spanned by $H_i$ and $W$. Since $W$ meets $H_i$, $H_i\vee W$ could not be the whole projective space. By factorization with $W$, $H_i\vee W$ becomes a proper projective subspace or the emptyset. A point $P$ of the factor geometry $\PG(d,\field)/W\cong\PG(k,\field)$ as a co-$k$-subspace $\hat{P}$ of $\PG(d,\field)$ could only be generated by $\higpig$ only if there exists a $k$-subspace $H_i\in\higpig$ such that $\hat{P}\cap H_i$ is not contained in $W$, that is, $H_i\vee W$ as a subspace (of the factor geometry) contains $P$. Thus, if $\higpig$ is a generator set of $k$-subspaces with respect to co-$k$-subspaces, and $\higpig$ is blocked by the subspace $W$ of co-dimension $k+1$, then $\higpig$ is a set of proper subspaces of the factor geometry $\PG(d,\field)/W\cong\PG(k,\field)$, covering all points of this projective space.

Extending these proper subspaces to hyperplanes of the factor geometry, and then consider these hyperplanes as the points of the \emph{dual geometry} $\left(\PG(d,\field)/W\right)^*$, we have a blocking set with respect to hyperplanes of $\left(\PG(d,\field)/W\right)^*$, thus, it has to contain at least as many elements as many points there are in a projective line.
\end{proof}

\begin{corollary}\label{cor:ekv}
Suppose that the set $\higpig$ of $k$-subspaces has \emph{at most} $|\field|$ elements. Then $\higpig$ is a set of higgledy-piggledy $k$-subspaces if and only if there is no subspace of co-dimension $k+1$ meeting each element of $\higpig$.\qed
\end{corollary}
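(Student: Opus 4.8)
The plan is to deduce Corollary~\ref{cor:ekv} purely formally from the two preceding results, Theorem~\ref{thm:suff} and Proposition~\ref{prop:sundiszno}; no genuinely new argument is needed, only a combination of the two together with the cardinality hypothesis.

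For the direction ``$\Leftarrow$'', I would assume that no subspace of co-dimension $k+1$ meets every element of $\higpig$. Then Theorem~\ref{thm:suff} applies verbatim and yields that $\bigcup\higpig$ is a generator set with respect to co-$k$-subspaces, i.e.\ that $\higpig$ is a set of higgledy-piggledy $k$-subspaces. Note that this implication holds for every $\higpig$ regardless of its size, so the hypothesis $|\higpig|\le|\field|$ plays no role in this direction.

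For the direction ``$\Rightarrow$'', I would argue by contraposition (equivalently, by contradiction). Suppose that $\higpig$ is a set of higgledy-piggledy $k$-subspaces and that, contrary to the claim, there does exist a subspace $W$ of co-dimension $k+1$ meeting each element of $\higpig$. Then $\higpig$ satisfies exactly the hypotheses of Proposition~\ref{prop:sundiszno}, so $|\higpig|$ is at least the number of points of a projective line over $\field$; that is, $|\higpig|\ge q+1>q=|\field|$ when $\field=\field_q$ is finite, and $\higpig$ is infinite when $\field$ is not finite. In either case this contradicts the standing hypothesis $|\higpig|\le|\field|$, so no such $W$ exists, which is precisely the claimed implication.

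Combining the two directions gives the asserted equivalence. There is essentially no obstacle here: the content is carried entirely by Theorem~\ref{thm:suff} and Proposition~\ref{prop:sundiszno}, and the only thing to verify is the strict inequality $q+1>q$ (respectively, the finiteness clash in the infinite-field case) that makes the cardinality hypothesis incompatible with $\higpig$ being blocked by a co-$(k+1)$-subspace.
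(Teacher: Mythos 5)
Your proposal is correct and is exactly the argument the paper intends: the corollary carries a \qed because it follows immediately by combining Theorem~\ref{thm:suff} (for the ``if'' direction) with the contrapositive of Proposition~\ref{prop:sundiszno} together with the hypothesis $|\higpig|\le|\field|$ (for the ``only if'' direction). Your observation that the cardinality assumption is only needed in the second direction is also consistent with the paper's discussion following the corollary.
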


Thus, the sufficient condition in Theorem~\ref{thm:suff} is an
\emph{equivalent} condition if $|\higpig|\le|\field|$, that's why we
called it `almost-equivalent'.

\begin{remark}
If $\higpig$ is a set of (much more than $N$) projective $k$-subspaces such that there is no subspace $W$ of co-dimension $k+1$ meeting at least $N$ elements of $\higpig$ then arbitrary $N$ elements of $\higpig$ are in higgledy-piggledy arrangement.
\end{remark}


\subsection{Uniform weak subspace designs}

A similar (but not identical) property is called `well-spread-out' by {Guruswami and Kopparty} in~\cite{VGSK} where they gave the definition~\cite[Definition~2]{VGSK} of \emph{weak $(s,A)$ subspace designs}. Since we are interested in subspace designs containing subspaces of the same dimension, we define the \emph{uniform subspace designs}. From now on, we use the word \emph{rank} in linear context and the word \emph{dimension} in projective context exclusively, to avoid confusion.

\begin{definition}[Uniform weak subspace design]
A collection $\{H_1,\dots,H_N\}$ of linear subspaces of rank $r$ in the vector space $\field^m$ is called an \emph{$r$-uniform weak $(s,A)$ subspace design} if for every linear subspace $W\subset\field^{m}$ of rank $s$, the number of indices $i$ for which $\rank(H_i\cap W)>0$ is at most $A$.
\end{definition}

This definition is not meaningless only if the subspace design contains at least $N\ge A+1$ subspaces. Since a linear subspace $W$ of rank $s$ and a linear subspace $H$ of rank $r$ always meet each other nontrivially in the vector space $\field^m$ if $s+r>m$, the parameter $r$ should be at most $m-s$ if we seek nontrivial $r$-uniform $(s,A)$ subspace designs.

The standard scalar product $\langle a|b\rangle=\sum_{i=0}^{m-1}a_ib_i$ makes the isomorphism $(\field^m)^*\equiv\field^m$ canonical, in other words, the vector space $\field^m$ is self-dual. Let $H^{\bot}=\{a\in(\field^m)^*\equiv\field^m\;|\;\langle a|b\rangle=0:\forall b\in H\}$ denote the annihillator (orthogonal complementary) subspace of $H\le\field^m$ in $(\field^m)^*\equiv\field^m$. If $\rank H=r$ then $\rank H^{\bot}=m-r$.

If the parameter $r$ equals to $m-s$, then the dual of an $r$-uniform subspace design $\higpig$ (containing the annihillators of the elements of $\higpig$) is again a uniform subspace design.

\begin{theorem}\label{thm:weakdual}
If $\{H_1,\dots,H_N\}$ is an $(m-s)$-uniform weak $(s,A)$ subspace design in the linear space $\field^m$ of rank $m$ then the collection $\{H_1^{\bot},\dots,H_N^{\bot}\}$ of co-$(m-s)$-subspaces in the dual vector space $(\field^m)^*$ is an $s$-uniform weak $(m-s,A)$ subspace design.
\end{theorem}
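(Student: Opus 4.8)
The plan is to reduce the whole statement to one self-dual fact about subspaces whose ranks add up to the dimension of the ambient space, namely that having trivial intersection is preserved under taking annihilators. First I would record the two standard facts I will need: for any $H,W\le\field^m$ one has $(H+W)^{\bot}=H^{\bot}\cap W^{\bot}$ inside $(\field^m)^*\equiv\field^m$, and $(H^{\bot})^{\bot}=H$, so that $H\mapsto H^{\bot}$ is an inclusion-reversing bijection carrying the rank-$t$ subspaces of $\field^m$ onto the rank-$(m-t)$ subspaces of $(\field^m)^*$. Applying this with $t=m-s$ shows that each $H_i^{\bot}$ has rank $s$, hence $\{H_1^{\bot},\dots,H_N^{\bot}\}$ is a collection of the right kind (rank $s$, i.e.\ co-$(m-s)$-subspaces), and that every rank-$(m-s)$ subspace $W'\le(\field^m)^*$ can be written uniquely as $W'=W^{\bot}$ with $W\le\field^m$ of rank $s$.

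The key step is the equivalence: if $\rank H+\rank W=m$, then $\rank(H\cap W)>0$ if and only if $\rank(H^{\bot}\cap W^{\bot})>0$. To prove this I would combine the modular identity $\rank(H\cap W)+\rank(H+W)=\rank H+\rank W=m$ with the annihilator identity $\rank(H+W)+\rank\bigl((H+W)^{\bot}\bigr)=m$; subtracting gives $\rank(H\cap W)=\rank\bigl((H+W)^{\bot}\bigr)$, and by $(H+W)^{\bot}=H^{\bot}\cap W^{\bot}$ this says $\rank(H\cap W)=\rank(H^{\bot}\cap W^{\bot})$ — an equality of ranks, which is more than the claimed biconditional needs. (Equivalently, this is just the observation that $H\cap W=0\iff H+W=\field^m\iff (H+W)^{\bot}=0$, the first equivalence using precisely the hypothesis $r=m-s$.)

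With these ingredients the theorem is immediate. Fix an arbitrary rank-$(m-s)$ subspace $W'\le(\field^m)^*$ and write $W'=W^{\bot}$ with $\rank W=s$. Since each $H_i$ has rank $m-s$, the hypothesis $\rank H_i+\rank W=m$ holds, so by the equivalence $\rank(H_i^{\bot}\cap W')>0$ exactly when $\rank(H_i\cap W)>0$. Therefore the number of indices $i$ with $\rank(H_i^{\bot}\cap W')>0$ equals the number of indices $i$ with $\rank(H_i\cap W)>0$, which is at most $A$ because $\{H_1,\dots,H_N\}$ is an $(m-s)$-uniform weak $(s,A)$ subspace design. Hence $\{H_1^{\bot},\dots,H_N^{\bot}\}$ is an $s$-uniform weak $(m-s,A)$ subspace design. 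I do not expect a real obstacle; the only things requiring care are the rank bookkeeping and the remark that the termwise equality $\rank(H_i\cap W)=\rank(H_i^{\bot}\cap W^{\bot})$ obtained above simultaneously yields the strong version, since $\sum_i\rank(H_i^{\bot}\cap W')=\sum_i\rank(H_i\cap W)\le A$.
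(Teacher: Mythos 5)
Your proof is correct. The paper's own argument for this theorem is purely qualitative: it observes that two subspaces of complementary ranks $s$ and $m-s$ meet nontrivially exactly when some hyperplane $\Pi$ contains both, and that in the dual space this becomes the statement that the rank-one subspace $\Pi^{\bot}$ lies in both $H^{\bot}$ and $W^{\bot}$ --- so nontrivial intersection is a self-dual condition, and the count of indices is preserved. You instead prove the stronger termwise identity $\rank(H\cap W)=\rank\bigl((H+W)^{\bot}\bigr)=\rank(H^{\bot}\cap W^{\bot})$ from the modular rank formula together with $\rank H+\rank W=m$, and then read off the biconditional as a special case. This is a genuinely different key lemma, and it buys you more: as you note at the end, the same rank equality immediately gives $\sum_i\rank(H_i^{\bot}\cap W')=\sum_i\rank(H_i\cap W)\le A$, which is precisely the paper's separate Theorem~\ref{thm:strongdual} on duals of \emph{strong} subspace designs --- indeed the paper's proof of that later theorem is essentially your computation. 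So your single argument subsumes both duality statements, at the cost of slightly more rank bookkeeping than the paper's one-line hyperplane observation. All the ingredients you use ($(H+W)^{\bot}=H^{\bot}\cap W^{\bot}$, double annihilation, the dimension formula) are standard and correctly applied, and the reduction of an arbitrary rank-$(m-s)$ subspace of the dual to the form $W^{\bot}$ is handled properly.
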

\begin{proof}
The linear subspace $W$ of rank $s$ and a linear subspace $H$ of rank $m-s$ meet each other nontrivially in the vector space $\field^m$ if and only if there exists a hyperplane $\Pi$ contaning both $H$ and $W$. In the dual space $(\field^m)^*$ it means that the one-dimensional subspace $\Pi^{\bot}$ is contained by both $H^{\bot}$ and $W^{\bot}$.
\end{proof}

If the parameter $r$ is less than $m-s$ then the dual of an $r$-uniform $(s,A)$ subspace design in the linear space $\field^m$ is not necesseraly a nontrivial subspace design.

If the weak $(s,A)$ subspace design $\higpig=\{H_1,\dots,H_N\}$ is non-uniform (that is, for each linear subspace $W<\field^m$ of rank $s$ there exist at most $A$ elements of $\higpig$ meeting $W$ non-trivially but for example $\rank H_1\neq\rank H_2$) then its dual is not necesseraly a weak subspace design at all.

The following proposition makes connection between uniform weak subspace designs and higgledy-piggledy ($k$-generator) subspaces.

\begin{proposition}\label{prop:connection}
If the set $\{H_1,\dots,H_N\}$ of \emph{linear} subspaces is a $(k+1)$-uni\-form weak $(d-k,A)$ subspace design in the vector space $\field^{d+1}$ then arbitrary subset $\higpig$ of at least $A+1$ elements (among $H_1,\dots,H_N$) is a set of \emph{projective} $k$-subspaces in higgledy-piggledy arrangement.

And conversely, suppose that $\{H_1,\dots,H_N\}$ is a set of \emph{projective} $k$-sub\-spaces in $\PG(d,\field)$ and there exists a finite positive integer $A<|\field|$ such that for each subset $\higpig\subset\{H_1,\dots,H_N\}$: if $|\higpig|=A+1$ then $\higpig$ is a set of $k$-subspaces in higgledy-piggledy arrangement. In this case $\{H_1,\dots,H_N\}$ is a set of \emph{linear} subspaces constituting a $(k+1)$-uniform weak $(d-k,A)$ subspace design in the vector space $\field^{d+1}$.
\end{proposition}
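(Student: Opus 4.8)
The bridge is the standard dictionary between projective subspaces of $\PG(d,\field)$ and linear subspaces of $\field^{d+1}$: each projective $k$-subspace $H_i$ is a linear subspace of rank $k+1$, each co-$(k+1)$-subspace of $\PG(d,\field)$ (a projective subspace of dimension $d-k-1$) is a linear subspace $W$ of rank $(d-k-1)+1=d-k$, and ``$H_i$ and $W$ share a projective point'' means precisely $\rank(H_i\cap W)>0$ in $\field^{d+1}$. Note that $r+s=(k+1)+(d-k)=d+1=m$, so we are exactly in the self-complementary range of Theorem~\ref{thm:weakdual}; once the translation is in place, both directions are short deductions from results already proved.

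For the forward direction I would use Theorem~\ref{thm:suff} in contrapositive form. Take $\higpig\subseteq\{H_1,\dots,H_N\}$ with $|\higpig|\ge A+1$ and assume $\higpig$ is \emph{not} a $k$-generator set. Then, since the sufficient condition of Theorem~\ref{thm:suff} fails, some co-$(k+1)$-subspace $W$ meets every element of $\higpig$. Reading $W$ as a linear subspace of rank $d-k$, this produces at least $A+1$ indices $i$ with $\rank(H_i\cap W)>0$, contradicting the defining property of a $(k+1)$-uniform weak $(d-k,A)$ subspace design. No bound on $|\higpig|$ is needed here, since only the sufficient — not the equivalent — condition is invoked.

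For the converse, suppose for contradiction that $\{H_1,\dots,H_N\}$, viewed as rank-$(k+1)$ linear subspaces of $\field^{d+1}$, fails to be a weak $(d-k,A)$ subspace design. Then there is a linear subspace $W$ of rank $d-k$ with $\rank(H_i\cap W)>0$ for at least $A+1$ indices $i$; let $\higpig$ consist of exactly $A+1$ of the corresponding $k$-subspaces. By hypothesis $\higpig$ is in higgledy-piggledy arrangement, while projectively $W$ is a co-$(k+1)$-subspace meeting each element of $\higpig$. Since $A<|\field|$ is a finite positive integer we have $|\higpig|=A+1\le|\field|$, so Corollary~\ref{cor:ekv} tells us that a higgledy-piggledy $\higpig$ of this size cannot be met by any co-$(k+1)$-subspace — a contradiction. (Equivalently, Proposition~\ref{prop:sundiszno} would force $|\higpig|\ge q+1$ when $\field=\field_q$.) Hence no such $W$ exists. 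The only place that needs care is the off-by-one bookkeeping ``co-dimension $k+1$ $\leftrightarrow$ rank $d-k$'' together with the verification $A+1\le|\field|$ that licenses Corollary~\ref{cor:ekv}; everything else is formal, and the degenerate case $N\le A$ is vacuous on the hypothesis side and trivial on the conclusion side.
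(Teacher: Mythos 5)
Your proposal is correct and follows essentially the same route as the paper: the forward direction is Theorem~\ref{thm:suff} (you state it contrapositively, the paper directly), and the converse rests on Proposition~\ref{prop:sundiszno} via the bound $A+1\le|\field|$, which is exactly the content of Corollary~\ref{cor:ekv} that you invoke. The extra care you take with the rank/co-dimension dictionary and the vacuous case $N\le A$ is sound but not a different argument.
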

\begin{proof}
If $\{H_1,\dots,H_N\}$ is a $(k+1)$-uniform weak $(d-k,A)$ subspace design in $\field^{d+1}$ and $\higpig$ is a subset of at least $A+1$ elements among $H_1,\dots,H_N$ then for each linear subspace $W<\field^{d+1}$ of rank $d-k$ (i.e. $W<\PG(d,\field)$ is of co-dimension $k+1$) there exists at least one element of $\higpig$ disjoint to $W$ in projective sense (or meeting $W$ trivially in linear sense). So, $\higpig$ satisfies the sufficient condition of Theorem~\ref{thm:suff}.

Suppose that each subset $\higpig\subset\{H_1,\dots,H_N\}$ of cardinality $A+1$ is a set of projective $k$-subspaces in higgledy-piggledy arrangement. Since $A$ is \emph{less} then the cardinality of the field $\field$, then $|\higpig|=A+1$ is less than the cardinality of a projective line, and thus, Proposition~\ref{prop:sundiszno} concludes that there cannot exist a projective subspace $W$ of co-dimension $k+1$ (i.e. a linear subspace of rank $d-k$) meeting each element of $\higpig$. Thus, for each linear subspace $W$ of rank $d-k$ there are at most $A$ elements of $\{H_1,\dots,H_N\}$ meeting $W$ nontrivially.
\end{proof}

\subsection{Uniform strong subspace designs}

{Guruswami and Kopparty} defined the \emph{strong $(s,A)$ subspace designs} in~\cite[Definition~3]{VGSK}. One can define the strong subspace designs containing same rank subspaces as follows.

\begin{definition}[Uniform strong subspace design]
A collection $\{H_1,\dots,H_M\}$ of linear subspaces of rank $r$ in the vector space $\field^m$ is called an \emph{$r$-uniform strong $(s,A)$ subspace design} if for every linear subspace $W\subset\field^m$ of rank $s$, the sum $\sum_{i=1}^M\rank(H_i\cap W)$ is at most $A$.
\end{definition}

\begin{remark}
As it mentioned also by {Guruswami and Kopparty}~\cite{VGSK}, every $r$-uniform strong $(s,A)$ subspace design is also an $r$-uniform weak $(s,A)$ subspace design, and every $r$-uniform weak $(s,A)$ subspace design is also an $r$-uniform strong $(s,\min\{sA,rA\})$ subspace design.
\end{remark}

{Guruswami and Kopparty}~\cite{VGSK} constructed $r$-uniform strong $(s,A)$ subspace designs in the vector space $\field_q^m$ over the finite field of $q>m$ elements. Their first construction~\cite[Section~4]{VGSK} is based on Reed--Solomon codes. Their second construction~\cite[Section~5]{VGSK} is based on multiplicity codes but this second construction works only if $\karak\field_q>m$.
Translating the notation of Guruswami's and Kopparty's work~\cite{VGSK} to the slightly different convention of this article, \cite[Theorem~14, Theorem~17 and Theorem~20]{VGSK} say that $A\le\frac{(m-1)s}{m-r-h(s-1)}$ for both constructions. The work~\cite{higpig} sharpened these results as follows.

\begin{theorem}[{\rm\cite[Theorem~38]{higpig}}]\label{thm:GKimp}
$A\le\frac{(m-\frac{s+1}{2})s}{m-r-h(s-1)}$ for the first Guruswami--Kopparty construction, and $A\le\frac{(m-s)s}{m-r-h(s-1)}$ for the second Guruswami--Kopparty construction.\qed
\end{theorem}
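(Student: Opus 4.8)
The statement is \cite[Theorem~38]{higpig}, quoted here without proof; the argument one gives is a sharpening of the Wronskian bound already implicit in \cite{VGSK}. Both Guruswami--Kopparty constructions identify $\field_q^m$ with a fixed $m$-dimensional space $\mathcal{P}$ of univariate polynomials over $\field_q$ and attach to the $i$-th evaluation point $\zeta_i$ the rank-$r$ subspace $H_i\subset\mathcal P$ cut out by $m-r$ linear ``vanishing conditions'' at $\zeta_i$: for the first (Reed--Solomon) construction these say that the folded codeword of $f$ is zero on a block of coordinates around $\zeta_i$, and for the second (multiplicity) construction that $f$ vanishes at $\zeta_i$ to Hasse order $\ge m-r$ (which is why $\karak\field_q>m$ is required there). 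In either case ``$H_i\cap W\neq 0$'' is equivalent to ``$W$ contains a nonzero polynomial with a root of the prescribed weight $\ge m-r$ concentrated at $\zeta_i$'', and $\rank(H_i\cap W)$ counts the linearly independent such polynomials.

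The plan is then a Wronskian count. Fix a test subspace $W\le\field_q^m$ of rank $s$ and choose a basis $g_1,\dots,g_s$ of $W\subset\mathcal P$ with strictly decreasing degrees. Let $\mathcal W=\mathcal W(g_1,\dots,g_s)$ be the corresponding generalised Wronskian (with derivative orders spaced by $h$); it is a nonzero polynomial, and the standard divisibility lemma for Wronskians gives that whenever some nonzero $\field_q$-combination of the $g_j$ vanishes at a point $\zeta$ with weight $\ge m-r$, the point $\zeta$ is a root of $\mathcal W$ of order $\ge(m-r)-h(s-1)$. Applying this at every index $i$ with $H_i\cap W\neq 0$ (whose $\zeta_i$ are pairwise distinct) and comparing with $\deg\mathcal W\le\sum_{j}\deg g_j$ gives
\[
\big|\{\,i:\ H_i\cap W\neq 0\,\}\big|\cdot\bigl(m-r-h(s-1)\bigr)\ \le\ \sum_{j=1}^{s}\deg g_j ,
\]
and the same computation carried out with multiplicities bounds $\sum_i\rank(H_i\cap W)$ in the same way, settling the strong case too.

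It remains to bound $\sum_j\deg g_j$. For the first construction $\mathcal P$ is the full space of polynomials of degree $\le m-1$, so the strictly decreasing degrees obey $\deg g_j\le m-j$ and hence $\sum_{j=1}^s\deg g_j\le\sum_{j=1}^s(m-j)=s\bigl(m-\tfrac{s+1}{2}\bigr)$, which is the first claimed bound. For the second construction one exploits the extra structural feature that in the multiplicity-code realisation every polynomial lying in such a test space already has degree $\le m-s$, so trivially $\sum_j\deg g_j\le s(m-s)$, the second claimed bound. I expect the main obstacle to be precisely this translation: writing the two Guruswami--Kopparty constructions faithfully, verifying that membership in $H_i$ forces vanishing weight exactly $m-r$ with the correct folding/multiplicity bookkeeping inside the Wronskian (the source of the $-h(s-1)$ in the denominator), and isolating the two degree ceilings $m-1$ and $m-s$ -- which are exactly what upgrade the earlier $\frac{(m-1)s}{m-r-h(s-1)}$ of \cite{VGSK} to the sharper numerators claimed here.
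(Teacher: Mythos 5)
First, a point of calibration: the paper does not prove this statement at all --- it is imported verbatim from \cite[Theorem~38]{higpig} and stated without proof, so there is no in-paper argument to compare yours against. That said, your overall strategy is the right one and is indeed the mechanism behind both the original bound $\frac{(m-1)s}{m-r-h(s-1)}$ of \cite{VGSK} and its sharpening: pick a degree-staggered basis $g_1,\dots,g_s$ of (the polynomial realisation of) $W$, form the appropriate generalised Wronskian $\mathcal{W}$, observe that each index $i$ with $H_i\cap W\neq0$ contributes a root of $\mathcal{W}$ of multiplicity at least $m-r-h(s-1)$, and compare with $\deg\mathcal{W}$. Your derivation of the first numerator from the normalisation $\deg g_j\le m-j$, giving $\sum_{j=1}^s(m-j)=s\bigl(m-\tfrac{s+1}{2}\bigr)$, is exactly the sharpening over the crude bound $\deg g_j\le m-1$.

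The gap is in your second numerator. The justification ``in the multiplicity-code realisation every polynomial lying in such a test space already has degree $\le m-s$'' cannot be right: $W$ ranges over \emph{all} rank-$s$ subspaces of $\field_q^m\cong\field_q[X]_{<m}$, and such a $W$ may perfectly well contain $X^{m-1}$, so no degree ceiling below $m-1$ is available on the test space. The extra saving of $\binom{s}{2}$ (note $s\bigl(m-\tfrac{s+1}{2}\bigr)-\binom{s}{2}=s(m-s)$) has to come from the Wronskian determinant itself: in the multiplicity construction the rows are genuine Hasse derivatives, the row of $i$-th derivatives has entries of degree $\deg g_j-i$, and hence $\deg\mathcal{W}\le\sum_j\deg g_j-\binom{s}{2}\le s(m-s)$; no such degree drop occurs for the shift operators $g_j(X)\mapsto g_j(\gamma X)$ underlying the Reed--Solomon construction, which is precisely why the two numerators differ. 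Relatedly, spacing the derivative orders by $h$ inside the numerator computation, as you propose, would yield $s\bigl(m-\tfrac{s+1}{2}\bigr)-h\binom{s}{2}$, which disagrees with the stated $(m-s)s$ for $h>1$; the parameter $h$ belongs only in the root-multiplicity (denominator) bookkeeping. Finally, for the only case this paper actually uses ($m=r+s$, which forces $h=1$), the weak $(s,rs)$ conclusion for these same constructions is re-derived later by a completely different route --- the Pl\"ucker-coordinate criterion of Lemma~\ref{lem:ind} and the polynomial-matrix degree count of Lemma~\ref{lem:dual} --- independently of Theorem~\ref{thm:GKimp}.
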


\begin{remark}
The parameter what we denote here $h$, comes from the trick applying the extension field $\field_{q^h}$ during the constructions. The basic case is $h=1$. The constraint $m-r>h(s-1)$ results the bounds $\frac{m-r}{s-1}>h>0$ if $s>1$.
\end{remark}

\begin{theorem}\label{thm:strongdual}
The dual of a $(m-s)$-uniform \emph{strong} $(s,A)$ subspace design in $\field^m$ is an $s$-uniform \emph{strong} $(m-s,A)$ subspace desing in $(\field^m)^*\equiv\field^m$.
\end{theorem}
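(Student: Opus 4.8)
The plan is to run the same duality as in Theorem~\ref{thm:weakdual}, but keeping track of the \emph{ranks} of the intersections rather than only of whether they vanish. The single linear-algebra fact needed is that annihilation exchanges sums and intersections: for subspaces $H,W\le\field^m$ one has $(H+W)^{\bot}=H^{\bot}\cap W^{\bot}$ in $(\field^m)^*\equiv\field^m$ (an element of the dual kills $H+W$ if and only if it kills $H$ and kills $W$).

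First I would record the set-up. The map $V\mapsto V^{\bot}$ is an inclusion-reversing bijection between the rank-$(m-s)$ subspaces of $(\field^m)^*$ and the rank-$s$ subspaces of $\field^m$; moreover each $H_i^{\bot}$ has rank $m-(m-s)=s$, so $\{H_1^{\bot},\dots,H_N^{\bot}\}$ is an $s$-uniform family, and the subspaces against which it must be tested are exactly those of rank $m-s$. Fix one such test subspace and write it as $W^{\bot}$ with $W\le\field^m$ of rank $s$.

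Next I would compute, for each $i$, the rank of $H_i^{\bot}\cap W^{\bot}$. Using the identity above, $\rank(H_i^{\bot}\cap W^{\bot})=\rank\bigl((H_i+W)^{\bot}\bigr)=m-\rank(H_i+W)$. Now $\rank(H_i+W)=\rank H_i+\rank W-\rank(H_i\cap W)=(m-s)+s-\rank(H_i\cap W)$, and the two occurrences of $m$ cancel, leaving $\rank(H_i^{\bot}\cap W^{\bot})=\rank(H_i\cap W)$. Summing over $i$ and invoking the strong $(s,A)$ hypothesis on $\{H_1,\dots,H_N\}$ gives $\sum_{i}\rank(H_i^{\bot}\cap W^{\bot})=\sum_{i}\rank(H_i\cap W)\le A$; since $W^{\bot}$ was an arbitrary rank-$(m-s)$ subspace of the dual, this is precisely the claim.

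I do not expect any genuine obstacle; the only point requiring care is the rank bookkeeping, and in particular the fact that it is the hypothesis $r=m-s$ which makes the two $m$'s cancel. For a general uniform rank $r\le m-s$ the same computation yields $\rank(H_i^{\bot}\cap W^{\bot})=\rank(H_i\cap W)+(m-s-r)$, so the nonnegative error term $m-s-r$, summed over the $N$ subspaces, obstructs the clean duality — matching the earlier observation that for $r<m-s$ the dual need not even be a nontrivial subspace design.
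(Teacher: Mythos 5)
Your proof is correct and follows essentially the same route as the paper's: both rest on the identity $(H_i\vee W)^{\bot}=H_i^{\bot}\cap W^{\bot}$ together with the rank formula $\rank(H_i+W)=\rank H_i+\rank W-\rank(H_i\cap W)$ to conclude $\rank(H_i^{\bot}\cap W^{\bot})=\rank(H_i\cap W)$ and then sum over $i$. Your closing remark quantifying the failure for $r<m-s$ by the error term $m-s-r$ is a correct bonus observation not present in the paper.
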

\begin{proof}
Let the set $\{H_1,\dots,H_N\}$ of linear subspaces ($H_i<\field^m$, $\rank H_i=m-s$) be a uniform strong $(s,A)$ subspace desing, that is, for each linear subspace $W<\field^m$ of rank $s$, $\sum_{i=1}^{N}\rank(W\cap H_i)\le A$.

Then the linear subspaces $H^{\bot}_1,\dots,H^{\bot}_N$ in $(\field^m)^*\equiv\field^m$ are of rank $s$ and for each linear subspace $V<(\field^m)^*$ of rank $m-s$ there exists a linear subspace $W<\field^m$ of rank $s$ such that $V=W^{\bot}$.

We know that $\rank V^{\bot}=m-\rank V$ and $(W\vee H_i)^{\bot}=W^{\bot}\cap H_i^{\bot}$, thus, $m-\rank(W\vee H_i)=\rank(W\vee H_i)^{\bot}=\rank(W^{\bot}\cap H_i^{\bot})$.

Since $\rank(W\cap H_i)=\rank W +\rank H_i -\rank(W\vee H_i)=s+(m-s)-\rank(W\vee H_i)=m-\rank(W\vee H_i)=\rank(W^{\bot}\cap H_i^{\bot})$ then the sum $\sum_{i=1}^{N}\rank(W^{\bot}\cap H^{\bot}_i)=\sum_{i=1}^{N}\rank(W\cap H_i)\le A$ for each linear subspace $W^{\bot}<(\field^m)^*$ of rank $m-s$.
\end{proof}

In this article, we are interested in $r$-uniform strong or weak $(s,A)$ subspace designs in $\field^m$ where $r+s=m$. If $s>1$ (and $m=r+s$) then the bound $\frac{m-r}{s-1}>h>0$ has the form $1+\frac{1}{s-1}=\frac{s}{s-1}>h>0$, thus, in this case $h=1$ is required in the Guruswami--Kopparty constructions. Thus, the first Guruswami--Kopparty construction gives us an $r$-uniform strong $(s,r\cdot s+\binom{s}{2})$ subspace design; and the second Guruswami--Kopparty construction (working if $\karak\field_q>m=r+s$) gives us an $r$-uniform strong $(s,r\cdot s)$ subspace design.

\begin{corollary}
For given $s\ge2$ and $r\ge2$ there exist an $r$-uniform strong $(s,r\cdot s+\min\{\binom{s}{2},\binom{r}{2}\})$ subspace design in the vector space $\field^{r+s}$ if the field $\field$ has more than $r+s$ elements. Moreover, for given $s\ge2$ and $r\ge2$ there exist an $r$-uniform strong $(s,r\cdot s)$ subspace design in the vector space $\field^{r+s}$ if the characteristic $\karak\field$ of the field $\field$ is bigger than $r+s$.
\end{corollary}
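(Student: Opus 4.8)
The plan is to combine the two Guruswami--Kopparty constructions, in the sharpened form recorded in Theorem~\ref{thm:GKimp}, with the self-duality of strong subspace designs established in Theorem~\ref{thm:strongdual}. As observed in the paragraph preceding the corollary, when $m=r+s$ and $s\ge 2$ the constraint $\frac{m-r}{s-1}>h>0$ becomes $\frac{s}{s-1}>h>0$, so $h=1$ is forced; substituting $h=1$ and $m=r+s$ into Theorem~\ref{thm:GKimp} yields an $r$-uniform strong $(s,\,r\cdot s+\binom{s}{2})$ subspace design in $\field^{r+s}$ whenever $|\field|>r+s$, and an $r$-uniform strong $(s,\,r\cdot s)$ subspace design whenever $\karak\field>r+s$.

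For the first statement it then remains to produce an $r$-uniform strong $(s,\,r\cdot s+\binom{r}{2})$ subspace design under the same hypothesis $|\field|>r+s$; taking, for each pair $(r,s)$, whichever of the two designs has the smaller second parameter then gives the asserted bound $r\cdot s+\min\{\binom{s}{2},\binom{r}{2}\}$. To this end I would apply the first Guruswami--Kopparty construction \emph{with the roles of $r$ and $s$ interchanged}: since $r\ge 2$ the same computation forces $h=1$, and since $r+s$ is symmetric in $r$ and $s$ the field-size hypothesis is unchanged, so one obtains an $s$-uniform strong $(r,\,r\cdot s+\binom{r}{2})$ subspace design in $\field^{r+s}$. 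Now invoke Theorem~\ref{thm:strongdual} with $m=r+s$ and the parameter $s$ of that statement replaced by $r$ (so that the ``$m-s$'' appearing there equals $s$): the dual collection $\{H_1^{\bot},\dots,H_N^{\bot}\}$ is an $r$-uniform strong $(s,\,r\cdot s+\binom{r}{2})$ subspace design in $(\field^{r+s})^{*}\equiv\field^{r+s}$, which is exactly what was needed.

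For the second statement one can simply quote the second Guruswami--Kopparty construction (valid because $\karak\field>r+s$), which already delivers an $r$-uniform strong $(s,\,r\cdot s)$ subspace design; alternatively, applying the construction with $r$ and $s$ swapped and then dualizing by Theorem~\ref{thm:strongdual} gives the same parameters, so here the minimum is vacuous and nothing more is required.

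I expect no genuine obstacle: the only points that need care are the verification that $h=1$ is the sole admissible value once $m=r+s$ (already carried out before the corollary), that it is the sharpened bounds of Theorem~\ref{thm:GKimp}, rather than the original $\frac{(m-1)s}{m-r-h(s-1)}$, that are being specialized, and that interchanging $r$ and $s$ preserves both the hypothesis ($|\field|>r+s$, respectively $\karak\field>r+s$) and the ambient space $\field^{r+s}$, so that Theorem~\ref{thm:strongdual} applies verbatim to the swapped design.
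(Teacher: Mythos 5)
Your proposal is correct and follows essentially the same route as the paper: specialize the two Guruswami--Kopparty constructions (via the sharpened bounds of Theorem~\ref{thm:GKimp} with $h=1$, $m=r+s$) to get the $r\cdot s+\binom{s}{2}$ and $r\cdot s$ designs, then obtain the $r\cdot s+\binom{r}{2}$ design by running the first construction with $r$ and $s$ interchanged and dualizing via Theorem~\ref{thm:strongdual}, finally taking the minimum. This is exactly the argument the paper gives.
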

\begin{proof}
Theorem~\ref{thm:strongdual} above says that the duals of the first and second Gu\-rus\-wa\-mi--Kopparty constructions are $s$-uniform strong $(r,r\cdot s+\binom{s}{2})$ and $s$-uniform strong $(r,r\cdot s)$ subspace designs, respectively. The second construction works only if $\karak\field_q>m=r+s$, but the first construction and its dual work over a field $\field$ of \emph{arbitrary characteristic} if $\field$ has more than $r+s$ elements.

The first Guruswami--Kopparty construction gives us an $r$-uniform strong $(s,r\cdot s+\binom{s}{2})$ subspace design and an $s$-uniform srong $(r,s\cdot r+\binom{r}{2})$ subspace design. The dual of this last design is an $r$-uniform strong $(s,s\cdot r+\binom{r}{2})$ subspace design.
\end{proof}

\subsection{Lower bound over arbitrary (large enough) fields}

In our previous work~\cite{higpig}, we proved the following lemma.

\begin{lemma}{\rm\cite[Lemma~13]{higpig}}\label{lem:lines}
If the set $\mathcal{L}$ of lines in $\PG(d,\field)$ has at most $\left\lfloor\frac{d}{2}\right\rfloor+d-1$ elements then there exists a subspace $H$ of co-dimension two meeting each line in $\mathcal{L}$.
\end{lemma}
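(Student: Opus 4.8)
\emph{Proof proposal.} The plan is to realize the desired codimension-two subspace as a hyperplane \emph{inside} a suitably chosen hyperplane $\Pi$ of $\PG(d,\field)$. This setup is convenient because a hyperplane $H$ of $\Pi$ automatically meets a line $\ell$ of $\PG(d,\field)$ for either of two cheap reasons: if $\ell\subseteq\Pi$, then $\ell$ and $H$ are a line and a hyperplane of the projective space $\Pi\cong\PG(d-1,\field)$ and hence intersect; and if $\ell\not\subseteq\Pi$, then $\ell\cap\Pi$ is a single point, so it suffices to have arranged that this point lies on $H$.

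First I would construct $\Pi$. Choose $\min\{|\mathcal{L}|,\lfloor d/2\rfloor\}$ of the lines of $\mathcal{L}$. Since $k$ projective lines span a subspace of dimension at most $2k-1$, the chosen lines span a subspace of dimension at most $2\lfloor d/2\rfloor-1\le d-1$, so they are all contained in a common hyperplane $\Pi$. Let $\mathcal{L}_1$ be the set of \emph{all} lines of $\mathcal{L}$ contained in $\Pi$; then $|\mathcal{L}_1|\ge\min\{|\mathcal{L}|,\lfloor d/2\rfloor\}$, and for each $\ell\in\mathcal{L}\setminus\mathcal{L}_1$ the intersection $\ell\cap\Pi$ is a single point $p_\ell$.

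The counting step is what pins down the constant. In the non-degenerate case $|\mathcal{L}|\ge\lfloor d/2\rfloor$, the hypothesis $|\mathcal{L}|\le\lfloor d/2\rfloor+d-1$ gives $|\mathcal{L}\setminus\mathcal{L}_1|=|\mathcal{L}|-|\mathcal{L}_1|\le|\mathcal{L}|-\lfloor d/2\rfloor\le d-1$. Hence $P=\{p_\ell:\ell\in\mathcal{L}\setminus\mathcal{L}_1\}$ consists of at most $d-1$ points, so $\langle P\rangle$ has dimension at most $d-2$. Since $\dim\langle P\rangle\le d-2<d-1=\dim\Pi$, the subspace $\langle P\rangle$ is proper in $\Pi$ and can be extended to a hyperplane $H$ of $\Pi$; this $H$ has dimension $d-2$, i.e. codimension two in $\PG(d,\field)$.

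Finally I would check that $H$ meets every line of $\mathcal{L}$: for $\ell\in\mathcal{L}_1$ this is the line-meets-hyperplane fact inside $\Pi\cong\PG(d-1,\field)$, and for $\ell\in\mathcal{L}\setminus\mathcal{L}_1$ we have $p_\ell\in\langle P\rangle\subseteq H$ and $p_\ell\in\ell$. The two degenerate situations ($|\mathcal{L}|\le\lfloor d/2\rfloor$, or $\mathcal{L}=\mathcal{L}_1$) are immediate, since then all lines already lie in $\Pi$ and any hyperplane of $\Pi$ works. I do not expect a genuine obstacle; the only delicate point is balancing the two budgets — at most $\lfloor d/2\rfloor$ lines forced into $\Pi$, and then at most $d-1$ leftover points — so that those points span a subspace of dimension \emph{at most} $d-2$, leaving room for a hyperplane of $\Pi$ of codimension exactly two. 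This is precisely where the value $\lfloor d/2\rfloor+d-1$ (rather than $\lfloor d/2\rfloor+d$) comes from.
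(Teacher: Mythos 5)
Your proof is correct and follows essentially the same route as the paper: the paper cites this lemma from its predecessor, but the argument you give (confine $\lfloor d/2\rfloor$ lines to a hyperplane $\Pi$, then catch the at most $d-1$ leftover intersection points in a hyperplane of $\Pi$) is exactly the $k=1$ instance of the inductive scheme the paper uses to prove its generalization, Lemma~\ref{lem:lower}. The counting and the degenerate cases are handled correctly.
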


This lemma can be generalized by induction as follows.

\begin{lemma}\label{lem:lower}
If the set $\higpig$ of $k$-subspaces in $\PG(d,\field)$ has at most $\left\lfloor\frac{d}{k+1}\right\rfloor+\left\lfloor\frac{d-1}{k}\right\rfloor+\dots+\left\lfloor\frac{d-k+1}{2}\right\rfloor+d-k$ elements then there exists a subspace $W$ of co-dimension $k+1$ meeting each subspace in $\higpig$.
\end{lemma}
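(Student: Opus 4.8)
The plan is to prove Lemma~\ref{lem:lower} by induction on $k$, with the base case $k=1$ being exactly Lemma~\ref{lem:lines} (after noting that $\lfloor d/2\rfloor + (d-1) = \lfloor\frac{d}{2}\rfloor + \lfloor\frac{d-1}{1}\rfloor$, which is the $k=1$ instance of the claimed bound, and a co-dimension-two subspace is a co-$(k+1)$-subspace for $k=1$). For the inductive step, assume the statement holds for $(k-1)$-subspaces in projective spaces of every dimension, and let $\higpig=\{H_1,\dots,H_N\}$ be a set of $k$-subspaces in $\PG(d,\field)$ with $N$ bounded by the displayed quantity. The idea is to first find \emph{one} hyperplane $\Pi\cong\PG(d-1,\field)$ that meets all elements of $\higpig$ in a $(k-1)$-subspace (generically), then work inside $\Pi$ and apply the induction hypothesis to the traces $H_i\cap\Pi$, obtaining a co-$k$-subspace $W$ of $\Pi$ meeting every $H_i\cap\Pi$; since $W$ has co-dimension $k$ in $\Pi$ it has co-dimension $k+1$ in $\PG(d,\field)$, and because $W\subseteq\Pi$ and $W$ meets each $H_i\cap\Pi\subseteq H_i$, it meets each $H_i$, as required.

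The key bookkeeping is that reducing the ambient dimension from $d$ to $d-1$ and the subspace dimension from $k$ to $k-1$ changes the bound in exactly the right way: the bound for $(k-1)$-subspaces in $\PG(d-1,\field)$ is
\[
\left\lfloor\frac{d-1}{k}\right\rfloor+\left\lfloor\frac{d-2}{k-1}\right\rfloor+\dots+\left\lfloor\frac{d-k+1}{2}\right\rfloor+(d-1-(k-1)),
\]
which is precisely the bound for $k$-subspaces in $\PG(d,\field)$ with the leading term $\lfloor\frac{d}{k+1}\rfloor$ deleted. So the plan is: partition (or rather, peel off) the $N$ subspaces — use up at most $\lfloor\frac{d}{k+1}\rfloor$ of them to control the choice of the hyperplane $\Pi$, and leave the remaining at most $\left(\text{that smaller bound}\right)$ many for the induction hypothesis inside $\Pi$. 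Concretely, I expect the hyperplane $\Pi$ is produced by an argument in the dual-style spirit of Lemma~\ref{lem:lines}: a hyperplane fails to meet $H_i$ in a $(k-1)$-subspace only if it contains $H_i$ entirely, and the hyperplanes through a fixed $k$-subspace $H_i$ form a subspace of codimension $k+1$ in the dual space $\PG(d,\field)^*$ (they form a $\PG(d-k-1,\field)$); so a set of $\lfloor\frac{d}{k+1}\rfloor+1$ many $k$-subspaces whose ``bad hyperplane'' loci jointly fail to cover the dual space gives the desired $\Pi$ — and the numerics $(d-k-1)+1$ summing correctly across the $\lfloor\frac{d}{k+1}\rfloor+1$ pieces, compared with $\dim\PG(d,\field)^*=d$, is what forces the floor term $\lfloor\frac{d}{k+1}\rfloor$.

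The main obstacle I anticipate is making the ``choose a good hyperplane'' step fully rigorous while keeping the count tight, i.e. showing that if $\higpig$ is \emph{not} entirely contained in any hyperplane's bad locus then there is a hyperplane $\Pi$ meeting \emph{every} element in a $(k-1)$-dimensional (not lower-dimensional) trace — one must ensure the trace $H_i\cap\Pi$ is genuinely a $(k-1)$-subspace, so that applying the induction hypothesis to the collection $\{H_i\cap\Pi\}$ inside $\Pi$ is legitimate. A secondary subtlety is the floor arithmetic: one must check that deleting the leading term of the $k$-bound really does yield the full $(k-1)$-bound in dimension $d-1$, term by term, which is immediate since $\left\lfloor\frac{d-j}{k+1-j}\right\rfloor$ for $j=1,\dots,k$ are exactly the terms $\left\lfloor\frac{(d-1)-(j-1)}{k-(j-1)}\right\rfloor$ appearing in the $(k-1)$-bound for $\PG(d-1,\field)$, and the final linear term $d-k = (d-1)-(k-1)$ matches as well; I would include this one-line verification and otherwise rely on the induction hypothesis and Theorem~\ref{thm:suff}'s geometric setup.
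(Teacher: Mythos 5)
Your induction skeleton and the floor-function bookkeeping are exactly right (the bound for $(k-1)$-subspaces in $\PG(d-1,\field)$ is the $k$-bound with the leading term $\lfloor\frac{d}{k+1}\rfloor$ deleted, and that is also how the paper organizes the count), but the key geometric step has a genuine gap, and in fact you have chosen the hyperplane with the \emph{wrong} property. You ask for a hyperplane $\Pi$ that contains \emph{no} element of $\higpig$, so that every trace $H_i\cap\Pi$ is a genuine $(k-1)$-subspace, and you then plan to ``peel off'' $\lfloor\frac{d}{k+1}\rfloor$ of the subspaces to make the count fit the induction hypothesis inside $\Pi$. But once you discard those $\lfloor\frac{d}{k+1}\rfloor$ subspaces, the co-$k$-subspace $W<\Pi$ produced by induction is only guaranteed to meet the traces of the \emph{remaining} subspaces; nothing in your argument forces $W$ to meet the peeled-off ones, and your dual-space covering argument only controls the existence of $\Pi$, not the behaviour of $W$. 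This is precisely the point where the proof must close, and your sketch leaves it open.

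The paper's resolution is the opposite choice: since $\lfloor\frac{d}{k+1}\rfloor$ many $k$-subspaces span a subspace of dimension at most $(k+1)\lfloor\frac{d}{k+1}\rfloor-1\le d-1$, one can choose the hyperplane $\Pi$ to \emph{contain} the $\lfloor\frac{d}{k+1}\rfloor$ peeled-off subspaces. Then any subspace $W<\Pi$ of co-dimension $k$ in $\Pi$ meets each of them automatically (a $(d-1-k)$-subspace and a $k$-subspace inside $\PG(d-1,\field)$ always intersect), while each of the remaining $H_i$ meets $\Pi$ in dimension at least $k-1$, so one picks a $(k-1)$-subspace $L_i\le\Pi\cap H_i$ and applies the induction hypothesis to the $L_i$ alone. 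Note also that your anticipated ``main obstacle'' --- ensuring the trace is exactly $(k-1)$-dimensional --- is a non-issue: a hyperplane never meets a $k$-subspace in dimension below $k-1$, and if it meets it in higher dimension you simply take any $(k-1)$-subspace inside the trace, since $W$ meeting $L_i$ already implies $W$ meets $H_i$. As written, your proposal does not yield a complete proof without replacing the hyperplane-selection step by the containment argument above.
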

\begin{proof}
Suppose by induction that for each $m$, at most $\left\lfloor\frac{m}{k}\right\rfloor+\left\lfloor\frac{m-1}{k-1}\right\rfloor+\dots+\left\lfloor\frac{m-k+2}{2}\right\rfloor+m-(k-1)$ subspaces of dimension $k-1$ in $\PG(m,\field)$ always be blocked by a subspace $W$ of co-dimension $k$. Lemma~\ref{lem:lines} says that this base of induction holds for $k=2$.

Let $H_1,\dots,H_{\lfloor\frac{d}{k+1}\rfloor}$ and $H_{\lfloor\frac{d}{k+1}\rfloor+i}$ ($1\le i\le\left\lfloor\frac{d-1}{k}\right\rfloor+\dots+\left\lfloor\frac{d-k+1}{2}\right\rfloor+d-k$) denote the elements of $\higpig$. There exists a subspace of dimension at most\linebreak $(k+1)\left\lfloor\frac{d}{k+1}\right\rfloor-1$ containing the planes $H_1,\dots,H_{\lfloor\frac{d}{k+1}\rfloor}$ so there exists a hyperplane $\Pi$ containing them. The hyperplane $\Pi$ meets each $k$-subspace in a subspace of dimension at least $k-1$, thus let $L_i\le\Pi\cap H_{i+\lfloor\frac{d}{k+1}\rfloor}$ be a $(k-1)$-subspace for $i=1,\dots,\left\lfloor\frac{m}{k}\right\rfloor+\left\lfloor\frac{m-1}{k-1}\right\rfloor+\dots+\left\lfloor\frac{m-k+2}{2}\right\rfloor+m-(k-1)$, where $m=d-1$.

By induction there exists a subspace $W<\Pi$ of co-dimension $k$ (co-dimension with respect to $\Pi$), that meets each subspace $L_i$ above, so $W$ meets the subspaces $H_{\lfloor\frac{d}{k+1}\rfloor+i}$, $1\le i\le\left\lfloor\frac{d-1}{k}\right\rfloor+\left\lfloor\frac{d-2}{k-1}\right\rfloor+\dots+\left\lfloor\frac{d-k+1}{2}\right\rfloor+d-k$. Subspaces $H_1,\dots,H_{\lfloor\frac{d}{k+1}\rfloor}$ are contained in $\Pi$, and $W$ has co-dimension $k$ in $\Pi$, thus, $W$ meets them also. The subspace $W$ has co-dimension $k+1$ in $\PG(d,\field)$ and it meets all the elements of $\higpig$.
\end{proof}

\begin{theorem}[Lower bound]\label{thm:lowerbound}
A generator set $\higpig$ of $k$-subspaces in $\PG(d,\field)$ has to contain at least $\min\left\{|\field|,\sum_{i=0}^{k}\left\lfloor\frac{d-k+i}{i+1}\right\rfloor\right\}+1$ elements.
\end{theorem}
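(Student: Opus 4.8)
The plan is to prove the contrapositive: a set $\higpig$ of $k$-subspaces in $\PG(d,\field)$ with $|\higpig|\le\min\bigl\{|\field|,\sum_{i=0}^{k}\lfloor\frac{d-k+i}{i+1}\rfloor\bigr\}$ cannot be a generator set with respect to co-$k$-subspaces. The first thing I would record is that the sum $\sum_{i=0}^{k}\lfloor\frac{d-k+i}{i+1}\rfloor$ is precisely the threshold occurring in Lemma~\ref{lem:lower}: substituting $j=k-i$ turns it into $\lfloor\frac{d}{k+1}\rfloor+\lfloor\frac{d-1}{k}\rfloor+\dots+\lfloor\frac{d-k+1}{2}\rfloor+(d-k)$, with the $i=0$ summand being $d-k$ and the $i=k$ summand being $\lfloor\frac{d}{k+1}\rfloor$.

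With that identification, suppose $|\higpig|\le\min\bigl\{|\field|,\sum_{i=0}^{k}\lfloor\frac{d-k+i}{i+1}\rfloor\bigr\}$. Then $|\higpig|$ is finite and at most $\sum_{i=0}^{k}\lfloor\frac{d-k+i}{i+1}\rfloor$, so Lemma~\ref{lem:lower} hands us a subspace $W$ of co-dimension $k+1$ meeting every member of $\higpig$. Simultaneously $|\higpig|\le|\field|$, so Corollary~\ref{cor:ekv} tells us that $\higpig$ is a set of higgledy-piggledy $k$-subspaces exactly when no co-$(k+1)$-subspace meets all its members. Since $W$ does meet all of them, $\higpig$ is not a generator set with respect to co-$k$-subspaces. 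Contraposing, every generator set must have strictly more than $\min\bigl\{|\field|,\sum_{i=0}^{k}\lfloor\frac{d-k+i}{i+1}\rfloor\bigr\}$ elements, which is the claimed bound.

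Because both ingredients --- the inductive blocking bound of Lemma~\ref{lem:lower} and the ``almost-equivalence'' of Corollary~\ref{cor:ekv} --- are already in place, there is no substantive obstacle remaining; the theorem is essentially their immediate conjunction. The only things I would be careful about are purely bookkeeping: the hypothesis $|\higpig|\le\min\{\cdots\}$ is exactly what makes \emph{both} $|\higpig|\le|\field|$ and $|\higpig|\le\sum_{i=0}^{k}\lfloor\frac{d-k+i}{i+1}\rfloor$ hold at the same time, so the two results can be invoked together; and the case of an infinite field is automatically included, since there $\min\{|\field|,\sum\cdots\}$ equals the finite sum, $\higpig$ is forced to be finite, and hence still has cardinality at most $|\field|$, so that Corollary~\ref{cor:ekv} still applies. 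I would also note in passing, as the surrounding text suggests, that when $|\field|$ is the smaller term this reproduces a $q+1$-type lower bound in the spirit of Proposition~\ref{prop:sundiszno}.
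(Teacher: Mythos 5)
Your proposal is correct and follows essentially the same route as the paper: the paper argues by cases (if a blocking co-$(k+1)$-subspace exists, Proposition~\ref{prop:sundiszno} forces $|\higpig|>|\field|$; if not, Lemma~\ref{lem:lower} forces $|\higpig|>\sum_{i=0}^{k}\lfloor\frac{d-k+i}{i+1}\rfloor$), which is just the case-split form of your contrapositive. Invoking Corollary~\ref{cor:ekv} instead of Proposition~\ref{prop:sundiszno} directly is only a repackaging of the same ingredient, and your index identification and handling of the infinite-field case are both fine.
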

\begin{proof}
If there exists a projective subspace $W$ of co-dimension $k+1$ meeting each element of $\higpig$ then Proposition~\ref{prop:sundiszno} says that $|\higpig|>|\field|$.

If there \emph{does not} exist any projective subspace $W$ of co-dimension $k+1$ meeting each element of $\higpig$ then Lemma~\ref{lem:lower} gives the result.
\end{proof}

As a consequence of this lower bound we get a bound for the parameter $A$ of weak $r$-uniform $(s,A)$ subspace designs.

\begin{corollary}
If the field $\field$ has at least $\left\lfloor\frac{m-1}{r}\right\rfloor+\left\lfloor\frac{m-2}{r-1}\right\rfloor+\dots+\left\lfloor\frac{m-r+1}{2}\right\rfloor+m-r+1$ elements, then for each $r$-uniform weak $(m-r,A)$ subspace design in $\field^m$, the parameter $A$ has to be at least $\left\lfloor\frac{m-1}{r}\right\rfloor+\left\lfloor\frac{m-2}{r-1}\right\rfloor+\dots+\left\lfloor\frac{m-r+1}{2}\right\rfloor+m-r$.
\end{corollary}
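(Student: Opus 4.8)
The plan is to read the corollary directly off Theorem~\ref{thm:lowerbound} through the dictionary of Proposition~\ref{prop:connection}. First I would relabel parameters: an $r$-uniform weak $(m-r,A)$ subspace design $\{H_1,\dots,H_N\}$ in $\field^m$ is exactly a $(k+1)$-uniform weak $(d-k,A)$ subspace design in $\field^{d+1}$ for $k=r-1$ and $d=m-1$, since then $k+1=r$, $d-k=m-r$ and $d+1=m$. I would also recall here the convention noted right after the definition of a uniform weak subspace design, namely that a genuine such design has $N\ge A+1$ elements; if $N\le A$ the object is degenerate and there is nothing to prove, so I may assume $N\ge A+1$.

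Next I would feed this into Proposition~\ref{prop:connection}: any subset $\higpig\subseteq\{H_1,\dots,H_N\}$ with $|\higpig|=A+1$ is a set of projective $(r-1)$-subspaces of $\PG(m-1,\field)$ in higgledy-piggledy arrangement, i.e.\ a generator set with respect to co-$(r-1)$-subspaces. Theorem~\ref{thm:lowerbound}, applied with $k=r-1$ and $d=m-1$, then forces
\[
A+1 \;=\; |\higpig| \;\ge\; \min\left\{|\field|,\ \sum_{i=0}^{r-1}\left\lfloor\frac{m-r+i}{i+1}\right\rfloor\right\}+1 .
\]
The only computation is the bookkeeping: reindexing by $j=i+1$ gives $\sum_{i=0}^{r-1}\lfloor\frac{m-r+i}{i+1}\rfloor=(m-r)+\lfloor\frac{m-r+1}{2}\rfloor+\dots+\lfloor\frac{m-1}{r}\rfloor$, which is precisely the quantity $S$ occurring in the corollary. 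The hypothesis on $\field$ is exactly $|\field|\ge S+1$, hence $|\field|>S$ and the minimum equals $S$; therefore $A+1\ge S+1$, i.e.\ $A\ge S$, as asserted.

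I do not expect a substantial obstacle, since once the notation is aligned the statement is simply Theorem~\ref{thm:lowerbound} read in the contrapositive direction; the delicate points are only the verification that the two forms of the floor-sum $S$ coincide and that $|\field|\ge S+1$ makes the $\min$ resolve to $S$ (not to $|\field|$), together with the harmless degenerate case $N\le A$. Should one prefer to avoid routing through Proposition~\ref{prop:connection} and Theorem~\ref{thm:lowerbound}, the same bound drops out of a direct argument by contradiction from Lemma~\ref{lem:lower}: assuming $A\le S-1$ and choosing any $A+1$ of the $H_i$, regarded as $(r-1)$-subspaces of $\PG(m-1,\field)$, the inequality $A+1\le S$ lets Lemma~\ref{lem:lower} produce a co-$(k+1)$-subspace of $\PG(m-1,\field)$ — equivalently a rank-$(m-r)$ linear subspace $W\le\field^m$ — meeting all $A+1$ chosen subspaces nontrivially, contradicting the weak $(m-r,A)$ design property.
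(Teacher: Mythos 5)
Your proposal is correct and follows essentially the same route as the paper: set $d=m-1$, $k=r-1$, apply Proposition~\ref{prop:connection} to see that any $A+1$ of the design's elements are projective $k$-subspaces in higgledy-piggledy arrangement, and then invoke Theorem~\ref{thm:lowerbound}, with the hypothesis on $|\field|$ ensuring the minimum resolves to the floor-sum. The extra remarks (the degenerate case $N\le A$ and the alternative direct appeal to Lemma~\ref{lem:lower}) are fine but not needed.
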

\begin{proof}
Let $d=m-1$ and $k=r-1$. Proposition~\ref{prop:connection} says that arbitrary $A+1$ elements of a $(k+1)$-uniform weak $(d-k,A)$ subspace design (in $\field^{d+1}$) are projective $k$-subspaces (of $\PG(d,\field)$) in higgledy-piggledy position. Theorem~\ref{thm:lowerbound} concludes that $A+1\ge\left\lfloor\frac{d}{k+1}\right\rfloor+\left\lfloor\frac{d-1}{k}\right\rfloor+\dots+\left\lfloor\frac{d-k+1}{2}\right\rfloor+d-k+1$.
\end{proof}


\section{Gra{ss}mann--Pl{\"u}cker coordinates}

Let $\grass(r,s,\field)$ or simply $\grass(r,s)$ denote the Gra{ss}mannian of the linear subspaces of rank $r$ (and so, of co-dimension $s$) in the vector space $\field^{r+s}$, or, in other aspect $\grass(r,s)$ is the set of all projective subspaces of dimension $r-1$ (and co-dimension $s$) in $\PG(r+s-1,\field)$.

\paragraph{Pl{\"u}cker embedding}
Let $H<\field^{r+s}$ be a linear subspace of rank $r$ and let $a(1),\dots,a(r)$ and $b(1),\dots,b(r)$ be two arbitrary bases of $H$. Let $L<\field^{r+s}$ be another linear subspace of rank $r$ ($H\neq L$) and let $c(1),\dots,c(r)$ be a basis of $L$. Since $a(1)\wedge\dots\wedge a(r)=\lambda\cdot b(1)\wedge\dots\wedge b(r)\neq0$ for a suitable nonzero $\lambda\in\field$, and since $c(1)\wedge\dots\wedge c(r)\neq0$ is \emph{not} the element of the subspace of rank one, generated by $a(1)\wedge\dots\wedge a(r)$, this subspace $\{a(1)\wedge\dots\wedge a(r)\cdot\lambda\;|\;\lambda\in\field\}<\bigwedge^r\field^{r+s}$ of rank one can be identified with $H$. This `Pl{\"u}cker embedding' identifies the Gra{ss}mannian with the set of rank-one linear subspaces of $\bigwedge^r\field^{r+s}$ generated by totally decomposable multivectors, that is, $\grass(r,s)\subset\PG\left(\bigwedge^r\field^{r+s}\right)\equiv\PG(\binom{r+s}{r}-1,\field)$ is an algebraic variety of dimension $r\cdot s$.

\paragraph{Pl{\"u}cker coordinates}
Let $\mathbf{H}\in\bigwedge^r\field^{r+s}$ denote a homogeneous coordinate vector of a point in $\PG(\binom{r+s}{r}-1,\field)$. If $\mathbf{H}\in\bigwedge^r\field^{r+s}$ is nonzero and totally decomposable (i.e.~$\mathbf{H}=a(1)\wedge\dots\wedge a(r)\neq0$ is a multivector for suitable vectors $a(i)\in\field^{r+s}$) then $\mathbf{H}$ is called the `Pl{\"u}cker coordinate vector' of the subspace $H$ generated by the vectors $a(i)\in\field^{r+s}$. The coordinates $H_{i_1\dots i_r}\in\field$ ($0\le i_1\dots i_r\le r+s-1$) of the multivector $\mathbf{H}$ are called the Pl{\"u}cker coordinates of the subspace $H<\field^{r+s}$ of rank $r$.

\paragraph{Pl{\"u}cker relations}
The numbers $H_{i_1\dots i_r}\in\field$ ($0\le i_1\dots i_r\le r+s-1$) are the coordinates of a totally decomposable multivector $\mathbf{H}$ (i.e.~the Pl{\"u}cker coordinates of the subspace $H<\field^{r+s}$) if and only if for each $2r$-tuple $(i_1,\dots,i_{r-1},j_0,j_1,\dots,j_r)$ of indices (each of them between zero and $r+s-1$)
\begin{equation}\label{eq:pl}
\sum_{n=0}^{r}(-1)^nH_{i_1\dots i_{r-1}j_n}H_{j_0\dots\hat{j_n}\dots j_r}=0\tag{P1}
\end{equation}
where the notation $j_0\dots\hat{j_n}\dots j_r$ means that the symbol $j_n$ is missing from the list $j_0\dots j_r$ of symbols. These quadratic equations are called `Pl{\"u}cker relations' and according to~\cite[Theorem~3.1.6.]{SCH}, the Pl{\"u}cker relations completely determine the Gra{ss}mannian $\grass(r,s)\subset\PG(\binom{r+s}{r}-1,\field)$, moreover, they generate the ideal of polynomials vanishing on it.

The following property of the Pl{\"u}cker relations will play a key role later.
\begin{lemma}\label{lem:szummas}
Consider the Pl{\"u}cker coordinates of the elements of $\grass(r,s)$ or $\grass(s,r)$, and let $N$ be a positive integer between $r$ and $r\cdot s$. Let the integers $i_1,\dots,i_r,j_1,\dots,j_r$ be given such that $i_1+\dots+i_r=N=j_1+\dots+j_r$ and $0\le i_1<\dots<i_r\le r+s-1$ and $0\le j_1<\dots<j_r\le r+s-1$ and suppose that $(i_1,\dots,i_r)\neq(j_1,\dots,j_r)$. Then there exists a Pl{\"u}cker relation that has the form
\begin{equation*}
H_{i_1\dots i_r}H_{j_1\dots j_r}+\Sigma=0
\end{equation*}
where $\Sigma$ is the sum of some products $H_{k_1\dots k_r}H_{n_1\dots n_r}$, where $k_1+\dots+k_r<N<n_1+\dots+n_r$.
\end{lemma}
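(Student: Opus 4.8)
The plan is to produce the desired Plücker relation explicitly by choosing the index data $(i_1,\dots,i_{r-1},j_0,\dots,j_r)$ in the general relation~\eqref{eq:pl} so that the ``diagonal'' term is exactly $H_{i_1\dots i_r}H_{j_1\dots j_r}$, and then to argue that every other term in that relation has the claimed strict weight split. Throughout I will use the \emph{weight} of a Plücker coordinate $H_{k_1\dots k_r}$, meaning $k_1+\dots+k_r$, and I will reorder the (distinct) subscripts into increasing order at the cost of a sign, which is harmless for the statement. Note that in every summand $H_{i_1\dots i_{r-1}j_n}H_{j_0\dots\hat{j_n}\dots j_r}$ of~\eqref{eq:pl}, the multiset of subscripts appearing is the fixed multiset $\{i_1,\dots,i_{r-1}\}\cup\{j_0,\dots,j_r\}$; hence the sum of the two weights of the two factors is the constant $i_1+\dots+i_{r-1}+j_0+\dots+j_r$. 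So ``one factor has weight $<N$'' is equivalent to ``the other has weight $>$ (that constant) $-N$'', and the whole problem is to arrange that the only way to get weight exactly $N$ in one factor is the diagonal term, and that the diagonal term itself splits as $N+N$.

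First I would reduce to the case $i_r<j_1$, i.e.\ the two increasing tuples are ``interleaved as little as possible'': write $i_1<\dots<i_r$ and $j_1<\dots<j_r$ with common weight $N$ and $(i_1,\dots,i_r)\ne(j_1,\dots,j_r)$. If the two sets are disjoint and totally ordered ($i_r<j_1$) I apply~\eqref{eq:pl} with the $(r-1)$-tuple $(i_1,\dots,i_{r-1})$ and the $(r+1)$-tuple $(i_r,j_1,\dots,j_r)$; the $n=0$ term of the sum is $\pm H_{i_1\dots i_{r-1}i_r}H_{j_1\dots j_r}$, the diagonal term with split $N+N$, and every $n\ge1$ term moves one of the large indices $j_n$ over to the first factor and pulls $i_r$ into the second, which strictly raises the weight of the first factor above $N$ and (by the constant-sum remark) strictly lowers the weight of the second below $N$. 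That is exactly the assertion. The remaining work is the general (not totally ordered, possibly non-disjoint) case: here I would still choose the $(r-1)$-tuple to be $(i_1,\dots,i_{r-1})$ and the $(r+1)$-tuple to be $(i_r,j_1,\dots,j_r)$, but now I must check (a) that the diagonal term is still $\pm H_{i_1\dots i_r}H_{j_1\dots j_r}$ — which holds because the chosen $(r+1)$-tuple's first entry is $i_r$ — and (b) that in every other term, replacing $i_r$ by some $j_n$ (with $n\ge1$) in the first factor strictly increases that factor's weight. Point (b) is where a little care is needed: it is true term-by-term provided $j_n>i_r$ for the relevant $n$, and provided the resulting $r$-subset actually has $r$ distinct elements so the coordinate is nonzero. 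To guarantee this uniformly one may, after possibly swapping the roles of $(i_\bullet)$ and $(j_\bullet)$, assume $i_r\le j_r$ and then note that any $j_n$ used in a surviving term satisfies $j_n\ge$ the element of $\{i_1,\dots,i_{r-1}\}$ it could have merely duplicated; the genuinely subtle sub-case is when $\{i_1,\dots,i_{r-1}\}$ and $\{j_1,\dots,j_r\}$ overlap heavily, which collapses many terms to zero and only helps.

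The main obstacle I anticipate is precisely this bookkeeping in the overlapping/interleaved case: ensuring that after fixing the split $(i_1,\dots,i_{r-1}\mid i_r,j_1,\dots,j_r)$, \emph{no} off-diagonal term of~\eqref{eq:pl} accidentally produces a factor of weight exactly $N$ (which would violate the strict inequalities $k_1+\dots+k_r<N<n_1+\dots+n_r$). I expect this to come down to the observation that the only index that can leave the first factor is $i_r$ and the only indices that can enter it are $j_1,\dots,j_r$, together with the inequality $i_r\le j_r$ (arranged by symmetry) and the hypothesis $(i_1,\dots,i_r)\ne(j_1,\dots,j_r)$; one then checks that equality of weights forces the swapped-in index to equal $i_r$, which returns the diagonal term. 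Once that case analysis is done, collecting the diagonal term on one side, dividing by its nonzero sign, and renaming the remaining summands as $\Sigma$ with the weight inequalities already verified finishes the proof.
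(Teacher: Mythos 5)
Your overall strategy is the same as the paper's: take the Pl\"ucker relation \eqref{eq:pl} with the split ``$(r-1)$ of the $i$'s $\mid$ one pivot $i$-index followed by $j_1,\dots,j_r$'', observe that the two factors of every summand have weights summing to the constant $2N$, and check that only the $n=0$ term splits as $N+N$. The gap is in your choice of pivot. You fix the pivot to be $i_r$, i.e.\ you take the $(r+1)$-tuple $(j_0,j_1,\dots,j_r)=(i_r,j_1,\dots,j_r)$, and you claim that the overlapping sub-case ``collapses many terms to zero and only helps.'' It does not only help: if $i_r=j_m$ for some $m$, then every term with $n\notin\{0,m\}$ has a repeated index $j_0=j_m$ in its second factor and vanishes, while the $n=0$ term $H_{i_1\dots i_r}H_{j_1\dots j_r}$ and the $n=m$ term (whose second factor re-sorts to $(-1)^{m-1}H_{j_1\dots j_r}$, giving total sign $(-1)^m(-1)^{m-1}=-1$) cancel each other exactly. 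The chosen relation degenerates to $0=0$ and contains no usable diagonal term. This case survives your normalization $i_r\le j_r$: for instance $r=3$, $(i_1,i_2,i_3)=(0,3,4)$ and $(j_1,j_2,j_3)=(1,2,4)$ both have weight $N=7$, and $i_3=j_3=4$, so swapping the roles of the two tuples does not escape it either. Your remark that ``equality of weights forces the swapped-in index to equal $i_r$, which returns the diagonal term'' identifies the dangerous term but misreads its effect: it returns the diagonal term \emph{with the opposite sign}.

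The missing idea, which is how the paper proceeds, is to choose as pivot any index $i_\ell\notin\{j_1,\dots,j_r\}$; such an $\ell$ exists because the two strictly increasing $r$-tuples are distinct, hence distinct as $r$-element sets, and neither can contain the other. With $j_0:=i_\ell$ the $r+1$ entries $j_0,j_1,\dots,j_r$ are pairwise distinct, so for $n\ge1$ the second factor has weight $N+j_0-j_n\neq N$, and by the constant-sum observation exactly one factor of each off-diagonal term has weight $<N$ and the other $>N$ (terms whose first factor acquires a repeated index are zero and may be dropped). To get the coefficient $+1$ on $H_{i_1\dots i_r}H_{j_1\dots j_r}$ rather than an unspecified sign, the paper moves $i_\ell$ to the last position by an \emph{even} permutation of $(i_1,\dots,i_r)$; your alternative of dividing the whole relation by the sign of the diagonal term would also be acceptable. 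As written, though, your proof is incomplete by its own account and the specific route you propose for closing it (pivot $=i_r$ after normalizing $i_r\le j_r$) fails on concrete instances.
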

\begin{proof}
Since $(i_1,\dots,i_r)\neq(j_1,\dots,j_r)$, there exists an index $i_\ell\notin\{j_1,\dots,j_r\}$. There exists an \emph{even} permutation $\sigma\in S_r$ such that $\sigma r=\ell$. The Pl{\"u}cker relation according to the $2r$-tuple $(i_{\sigma1}\dots i_{\sigma(r-1)},j_0=i_{\sigma r},j_1,\dots,j_r)$ is
\begin{equation}\label{eq:mp}
H_{i_{\sigma1}\dots i_{\sigma r}}H_{j_1\dots j_r}+
\sum_{n=1}^{r}(-1)^nH_{i_{\sigma1}\dots i_{\sigma(r-1)}j_n}H_{j_0\dots\hat{j_n}\dots j_r}=0\tag{P2}
\end{equation}
using the notation $j_0=i_{\sigma r}=i_\ell$ and separating the first term of the sum.

Because $j_0=i_\ell,j_1,\dots,j_r$ are $r+1$ distinct elements, $j_n$ is either strictly less or strictly greater than $j_0$ if $n\neq0$, and thus, $(\sum_{k=0}^rj_k)-j_n$ is either strictly greater or strictly less than $N=\sum_{k=1}^rj_k$ if $n\neq0$. And, since $i_{\sigma1}+\dots+i_{\sigma(r-1)}+i_{\sigma r}+j_1+\dots+j_r=2N$,
if $(\sum_{k=0}^rj_k)-j_n>N$ then $i_{\sigma1}+\dots+i_{\sigma(r-1)}+j_n<N$, and conversely, if $(\sum_{k=0}^rj_k)-j_n<N$ then $i_{\sigma1}+\dots+i_{\sigma(r-1)}+j_n>N$.

And last, since $\sigma$ is even, then $H_{i_1\dots i_r}=H_{i_{\sigma1}\dots i_{\sigma r}}$, we get that the Pl{\"u}cker relation~\ref{eq:mp} is in the required form.
\end{proof}

\begin{remark}
If $\{i_1,\dots,i_r\}=\{j_1,\dots,j_r\}$ then each Pl{\"u}cker relation containing the product $H_{i_1\dots i_r}H_{j_1\dots j_r}$ reduces to $0=0$.
\end{remark}

\subsection{Dual Pl{\"u}cker coordinates}

Since the map $\star:\grass(s,r)\rightarrow\grass(r,s):W\mapsto W^{\bot}$ is a bijection between the linear subspaces of rank $r$ and the linear subspaces of rank $s$, we could define `dual' Pl{\"u}cker coordinates of rank-$s$ subspaces having $r$ indices instead of $s$.

Let $W<\field^{r+s}$ be an arbitrary subspace of rank $s$ and let $W^{\bot}<\field^{r+s}$ its orthogonal complementary subspace of rank $r$. The Pl{\"u}cker coordinate vector $\mathbf{W^\star}$ of the orthogonal complementary subspace $W^{\bot}$ is called the `dual Pl{\"u}cker coordinate vector' of the subspace $W$. The coordinates $W^\star_{i_1\dots i_r}\in\field$ ($0\le i_1\dots i_r\le r+s-1$) of the multivector $\mathbf{W^\star}$ are called the `dual Pl{\"u}cker coordinates' of the subspace $W<\field^{r+s}$ of rank $s$.

Although the Pl{\"u}cker coordinate vector of the subspace $W$ is denoted by $\mathbf{W}$, the Pl{\"u}cker coordinate vector of its orthogonal complementary subspace $W^\bot$ is denoted by $\mathbf{W^\star}$ instead of $\mathbf{W^\bot}$ because we want to avoid confusion between similar notations and the notation $\{\mathbf{W}\}^\bot$ means the hyperplane of the vector space $\left(\bigwedge^s\field^{r+s}\right)^*$ orthogonal to the vector $\mathbf{W}\in\bigwedge^s\field^{r+s}$.


The standard scalar product of the outer power space $\bigwedge^r\field^{r+s}$ is defined by the following identity
\begin{equation*}
\langle a(1)\wedge\dots\wedge a(r)|b(1)\wedge\dots\wedge b(r)\rangle=
\begin{vmatrix}
\langle a(1)|b(1)\rangle&\dots&\langle a(1)|b(r)\rangle\\
\vdots&\ddots&\vdots\\
\langle a(1)|b(r)\rangle&\dots&\langle a(r)|b(r)\rangle
\end{vmatrix}
\end{equation*}
which is defined only for multivectors but it can be extended consistently. This makes the first isomorphism in $(\bigwedge^r\field^{r+s})^*\equiv\bigwedge^r(\field^{r+s})^*\equiv\bigwedge^r\field^{r+s}$ canonical. The second canonical isomorphism comes from the self-duality $(\field^{r+s})^*\equiv\field^{r+s}$.

\begin{lemma}
Let $W<\field^{r+s}$ be an arbitrary subspace of rank $s$ and let $\mathbf{W^\star}$ denote its \emph{dual} Pl{\"u}cker coordinate vector. Let $H<\field^{r+s}$ be an arbitrary subspace of rank $r$ and let $\mathbf{H}$ denote its Pl{\"u}cker coordinate vector. Then $H\cap W\neq\{0\}\Leftrightarrow\langle\mathbf{W^\star}|\mathbf{H}\rangle=0$.
\end{lemma}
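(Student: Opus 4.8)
The plan is to reduce the statement to a standard fact about Gram-type determinants. First I would rewrite $\mathbf{W^\star}$ as a Pl\"ucker coordinate vector of the rank-$r$ subspace $A:=W^{\bot}$; since the standard scalar product on $\field^{r+s}$ is non-degenerate (its Gram matrix on the standard basis is the identity), we have $A^{\bot}=W$, so the claim becomes: for two rank-$r$ subspaces $A,B\le\field^{r+s}$ with Pl\"ucker coordinate vectors $\mathbf{A},\mathbf{B}$, one has $\langle\mathbf{A}|\mathbf{B}\rangle=0$ if and only if $A^{\bot}\cap B\neq\{0\}$. Note that this reformulated statement is well posed: replacing the chosen basis of $A$ or of $B$ multiplies $\mathbf{A}$ or $\mathbf{B}$ by a nonzero scalar, so the vanishing of $\langle\mathbf{A}|\mathbf{B}\rangle$ is independent of the representatives, exactly as required for Pl\"ucker coordinate vectors.

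Next I would fix bases $a(1),\dots,a(r)$ of $A$ and $b(1),\dots,b(r)$ of $B$, so that $\mathbf{A}=a(1)\wedge\dots\wedge a(r)$, $\mathbf{B}=b(1)\wedge\dots\wedge b(r)$, and by the defining identity for the scalar product on $\bigwedge^r\field^{r+s}$ recalled above, $\langle\mathbf{A}|\mathbf{B}\rangle=\det\bigl(\langle a(i)|b(j)\rangle\bigr)_{1\le i,j\le r}$. The key observation is that this $r\times r$ matrix is precisely the matrix, with respect to the basis $(b(j))_j$ of $B$ and the standard basis of $\field^r$, of the linear map $\varphi\colon B\to\field^r$ sending $v$ to $(\langle a(1)|v\rangle,\dots,\langle a(r)|v\rangle)$. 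Since source and target both have rank $r$, $\det\varphi\neq0$ holds if and only if $\varphi$ is injective, i.e.\ if and only if $\ker\varphi=\{0\}$.

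It then remains to identify $\ker\varphi$. The map $v\mapsto(\langle a(1)|v\rangle,\dots,\langle a(r)|v\rangle)$ defined on all of $\field^{r+s}$ has kernel $\{v:\langle a|v\rangle=0\ \text{for all}\ a\in A\}$, which by symmetry and non-degeneracy of the standard scalar product equals $A^{\bot}$; restricting to $B$ gives $\ker\varphi=A^{\bot}\cap B$. Hence $\langle\mathbf{A}|\mathbf{B}\rangle=\det\varphi=0$ if and only if $A^{\bot}\cap B\neq\{0\}$. Substituting back $A=W^{\bot}$, $\mathbf{A}=\mathbf{W^\star}$, $B=H$, $\mathbf{B}=\mathbf{H}$, and using $A^{\bot}=W$, yields $\langle\mathbf{W^\star}|\mathbf{H}\rangle=0\Leftrightarrow W\cap H\neq\{0\}$, which is the assertion.

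I do not expect a genuine obstacle here; the only points requiring care are matching the two appearances of the same matrix (once as the determinant defining the induced form on $\bigwedge^r\field^{r+s}$, once as the matrix of $\varphi$) and checking non-degeneracy over an arbitrary field $\field$ — but the latter is automatic since the Gram matrix of the standard basis $e_{i_1}\wedge\dots\wedge e_{i_r}$ of $\bigwedge^r\field^{r+s}$ is again the identity. One could alternatively phrase the whole argument symmetrically via $H\cap W\neq\{0\}$ being equivalent to the existence of a hyperplane containing both (as in the proof of Theorem~\ref{thm:weakdual}), but the determinant computation above seems the most direct route.
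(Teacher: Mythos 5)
Your proof is correct and takes essentially the same route as the paper's: both reduce $\langle\mathbf{W^\star}|\mathbf{H}\rangle$ to the determinant of the $r\times r$ matrix $\bigl(\langle a(i)|b(j)\rangle\bigr)$ formed from a basis of $W^{\bot}$ and a basis of $H$, and identify its vanishing with the existence of a nonzero vector of $H$ orthogonal to all of $W^{\bot}$, hence lying in $W$. The only differences are presentational: you phrase the singularity of that matrix via the kernel of a map $H\to\field^r$ and make the appeal to non-degeneracy (i.e.\ $W^{\bot\bot}=W$) explicit, which the paper leaves implicit.
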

\begin{proof}
Let $a(1),\dots,a(s)$ be a basis of $W$ such that $\mathbf{W}=a(1)\wedge\dots\wedge a(s)$. Let $a(s\!+\!1),\dots,a(s\!+\!r)$ be a basis of $W^{\bot}$ such that $\mathbf{W^\star}=a(s\!+\!1)\wedge\dots\wedge a(s\!+\!r)$. And finally, let $b(1),\dots,b(r)$ be a basis of $H$ such that $\mathbf{H}=b(1)\wedge\dots\wedge b(s)$. $H\cap W\neq\{0\}$ if and only if $\exists v\in H$ such that $v\neq0$ and $v\bot W^{\bot}$. That is, $H\cap W\neq\{0\}$ if and only if $\alpha_1b(1)+\dots+\alpha_r b(r)\bot a(s+i)$ for each $i=1,\dots,r$. This is equivalent with the equation
\begin{equation*}
\begin{vmatrix}
\langle a(s+1)|b(1)\rangle&\dots&\langle a(s+1)|b(r)\rangle\\
\vdots&\ddots&\vdots\\
\langle a(s+1)|b(r)\rangle&\dots&\langle a(s+r)|b(r)\rangle
\end{vmatrix}=0
\end{equation*}
Thus, $H\cap W\neq\{0\}$ if and only if $\langle\mathbf{W^\star}|\mathbf{H}\rangle=0$.
\end{proof}

Since the standard basis of the outer power space $\bigwedge^r\field^{r+s}$ is $\{e(i_1)\wedge\dots\wedge e(i_r)\;|\;0\le i_1<\dots<i_r<r+s\}$, where $\{e(0),\dots,e(r\!+\!s\!-\!1)\}$ is the standard basis of $\field^{r+s}$, the standard scalar product is
\begin{equation*}
\langle\mathbf{L}|\mathbf{H}\rangle=\sum_{0\le i_1<\dots<i_r<r+s}L_{i_1,\dots,i_r}H_{i_1,\dots,i_r}.
\end{equation*}

\begin{lemma}
Let $\{\mathbf{H}(1),\dots,\mathbf{H}(N)\}$ denote the set of the Gra{ss}mann--Pl{\"u}cker \mbox{co-or}\-di\-nate vectors representing the elements of the set $\higpig$ of projective $(r-1)$-subspaces in $\PG(r+s-1,\field)$. There exists a subspace $W$ of co-dimension $r$ (rank $s$) in $\PG(r+s-1,\field)$ meeting each element of $\higpig$ \emph{if and only if} the subspace $\{\mathbf{H}(1)\}^{\bot}\cap\dots\cap\{\mathbf{H}(N)\}^{\bot}\le\PG(\binom{d+1}{r}-1,\field)$ meets the Gra{ss}mann variety $\grass(s,r)$, that is, the linear equation system
\begin{align*}
\sum_{i_1<\dots<i_r}H_{i_1\dots i_r}(1)W^\star_{i_1\dots i_r}&=0&\dots&&\sum_{i_1<\dots<i_r}H_{i_1\dots i_r}(N)W^\star_{i_1\dots i_r}&=0
\end{align*}
together with all the quadratic Pl{\"u}cker relations
\begin{equation*}
\sum_{n=0}^{r}(-1)^n W^\star_{i_1\dots i_{r-1}j_n}W^\star_{j_0\dots\hat{j_n}\dots j_r}=0
\end{equation*}
(for each $2r$-tuple $(i_1,\dots,i_{r-1},j_0,j_1,\dots,j_r)$ of indices) has nontrivial common solutions for the unknowns $W^\star_{i_1\dots i_r}$.
\end{lemma}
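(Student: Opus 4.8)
The plan is to reduce both sides of the claimed equivalence to the single intermediate statement \emph{``there exists a linear subspace $W<\field^{r+s}$ of rank $s$ with $\langle\mathbf{W^\star}|\mathbf{H}(i)\rangle=0$ for every $i=1,\dots,N$''}, and then to read off the left-hand side directly from the lemma proved just above (the one stating $H\cap W\neq\{0\}\Leftrightarrow\langle\mathbf{W^\star}|\mathbf{H}\rangle=0$). Indeed, a linear subspace $W<\field^{r+s}$ of rank $s$ is exactly a projective subspace of $\PG(r+s-1,\field)$ of dimension $s-1$, hence of co-dimension $r$; and by that preceding lemma $H_i\cap W\neq\{0\}$ --- i.e.\ $W$ meets the element of $\higpig$ represented by $\mathbf{H}(i)$ --- holds precisely when $\langle\mathbf{W^\star}|\mathbf{H}(i)\rangle=0$. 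So the intermediate statement says exactly that some co-$r$-subspace meets every element of $\higpig$, which is the left-hand side.

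For the linear part of the right-hand side I would argue as follows. Using the canonical identifications $\left(\bigwedge^r\field^{r+s}\right)^*\equiv\bigwedge^r(\field^{r+s})^*\equiv\bigwedge^r\field^{r+s}$, the hyperplane $\{\mathbf{H}(i)\}^\bot$ is precisely the set of $\mathbf{X}\in\bigwedge^r\field^{r+s}$ with $\langle\mathbf{X}|\mathbf{H}(i)\rangle=0$; expanding this pairing in the standard basis by the coordinate formula for the standard scalar product displayed above turns it into the single linear condition $\sum_{i_1<\dots<i_r}H_{i_1\dots i_r}(i)\,X_{i_1\dots i_r}=0$ on the coordinates of $\mathbf{X}$. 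Hence a homogeneous vector $\mathbf{X}$ represents a point of $\{\mathbf{H}(1)\}^\bot\cap\dots\cap\{\mathbf{H}(N)\}^\bot$ if and only if its coordinates solve the linear equation system of the statement.

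Next I would invoke the quadratic Pl{\"u}cker relations. By \cite[Theorem~3.1.6.]{SCH} a nonzero $\mathbf{X}\in\bigwedge^r\field^{r+s}$ satisfies all the Pl{\"u}cker relations (written in the unknowns $W^\star_{i_1\dots i_r}$) if and only if $\mathbf{X}$ is totally decomposable, i.e.\ $\mathbf{X}$ represents a point of the Grassmann variety $\grass(s,r)$; and by the definition of the dual Pl{\"u}cker coordinates these points are exactly the vectors $\mathbf{W^\star}$ coming from the rank-$s$ subspaces $W<\field^{r+s}$ (equivalently, the ordinary Pl{\"u}cker vectors of the rank-$r$ subspaces $W^\bot$), with $W$ uniquely recovered from $\mathbf{W^\star}$. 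Combining this with the previous paragraph: the linear system together with the Pl{\"u}cker relations has a nontrivial common solution if and only if some rank-$s$ subspace $W$ satisfies $\mathbf{W^\star}\in\{\mathbf{H}(1)\}^\bot\cap\dots\cap\{\mathbf{H}(N)\}^\bot$, i.e.\ $\langle\mathbf{W^\star}|\mathbf{H}(i)\rangle=0$ for all $i$ --- which is the intermediate statement, and which geometrically is exactly the assertion that $\{\mathbf{H}(1)\}^\bot\cap\dots\cap\{\mathbf{H}(N)\}^\bot$ meets $\grass(s,r)$.

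Since the conclusion is in the end just a recombination of the lemma above with the cited fact that the Pl{\"u}cker relations cut out the Grassmann variety, no genuinely hard step remains; the one thing that requires care is the bookkeeping of identifications. One must make sure that $\{\mathbf{H}(1)\}^\bot\cap\dots\cap\{\mathbf{H}(N)\}^\bot$ and $\grass(s,r)$ are read inside one and the same $\PG(\binom{r+s}{r}-1,\field)$ via the canonical self-dualities $(\field^{r+s})^*\equiv\field^{r+s}$ and $\left(\bigwedge^r\field^{r+s}\right)^*\equiv\bigwedge^r\field^{r+s}$, and that ``$\grass(s,r)$ in the $W^\star$-variables'' really is the image of the dual Pl{\"u}cker embedding of the rank-$s$ subspaces, so that its defining equations are precisely the Pl{\"u}cker relations written in the statement; once this is fixed, the equivalences above chain together without further work.
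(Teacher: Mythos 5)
Your proposal is correct and follows essentially the same route as the paper: both directions are read off from the preceding lemma ($H\cap W\neq\{0\}\Leftrightarrow\langle\mathbf{W^\star}|\mathbf{H}\rangle=0$) combined with the fact that the quadratic Pl\"ucker relations cut out exactly the (dual) Pl\"ucker coordinate vectors of rank-$s$ subspaces. Your version merely spells out the canonical identifications more explicitly than the paper does, which is harmless.
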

\begin{proof}
The quadratic Pl{\"u}cker relations completely determine the Pl{\"u}cker coordinates of the Gra{ss}mannian $\grass(r,s)\subset\PG(\binom{r+s}{r}-1,\field)$, and thus, they determine the dual Pl{\"u}cker coordinates of the Gra{ss}mannian $\grass(s,r)$.

So, if there exists a dual Pl{\"u}cker coordinate vector $\mathbf{W^\star}$ such that the scalar product $\langle\mathbf{W^\star}|\mathbf{H}(i)\rangle=0$ for each $i=1,\dots,r$ then the orthogonal complementary subspace $W$ of the subspace $W^{\bot}$ co-ordinatized by $\mathbf{W^\star}$ meets each subspace $H(i)$ co-ordinatized by $\mathbf{H}(i)$ nontrivially, and thus, $W$ meets each element of $\higpig$ nontrivialy.

Conversely, if there exists a subspace $W$ meeting each element of $\higpig$, then its dual Pl{\"u}cker coordinate vector $\mathbf{W^\star}$ is orthogonal to each $\mathbf{H}(i)$ and its coordinates satisfies the quadratic Pl{\"u}cker relations.
\end{proof}

\subsection{Lower bound over algebraically closed fields}

Since an algebraically closed field contains infinitely many elements, Corollary~\ref{cor:ekv} concludes that the finite set $\higpig$ of $k$-subspaces in $\PG(d,\field)$ over an algebraically closed field $\field$ could be a $k$-generator set if and only if the condition of Theorem~\ref{thm:suff} holds.

\begin{lemma}
{\rm\cite[Corollary~3.2.14 and Subsection~3.1.1]{SCH}}
The dimension of the Gra{ss}mannian as an algebraic variety is $\dim\grass(r,s)=r\cdot s$ and its degree is
\begin{equation*}
\deg\grass(r,s)=\frac{0!1!\dots(s\!-\!1)!}{r!(r\!+\!1)!\dots(r\!+\!s\!-\!1)!}\big(rs\big)!\tag*{\qed}
\end{equation*}
\end{lemma}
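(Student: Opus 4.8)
The plan is to treat the dimension and the degree separately, in both cases via the standard machinery for the Grassmannian. For the dimension I would exhibit the usual affine cover: for each $r$-element index set $S\subset\{0,\dots,r+s-1\}$ let $U_S\subset\grass(r,s)$ consist of those rank-$r$ subspaces $H$ whose Pl\"ucker coordinate $H_S$ is nonzero, equivalently whose projection onto the $S$-coordinates is an isomorphism onto $\field^r$. Every such $H$ has a unique basis for which the $r\times r$ submatrix indexed by $S$ is the identity, the remaining $r\times s$ entries being arbitrary; hence $U_S\cong\mathbb{A}^{rs}$. These $\binom{r+s}{r}$ charts cover $\grass(r,s)$, are irreducible of dimension $rs$, and have nonempty pairwise intersections, so $\grass(r,s)$ is a smooth irreducible variety of dimension $r\cdot s$.

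For the degree I would use Schubert calculus. The Pl\"ucker map embeds $\grass(r,s)$ by the complete linear system of the ample generator $\sigma_1$ of $\mathrm{Pic}(\grass(r,s))\cong\mathbb{Z}$ (the special Schubert divisor class), so $\deg\grass(r,s)$ equals the intersection number of $\sigma_1^{rs}$ against the class of a point. Expanding $\sigma_1^{rs}$ in the Schubert basis by iterated use of Pieri's rule, the coefficient of the point class counts saturated chains in Young's lattice from $\emptyset$ up to the $r\times s$ rectangle $\rho=(s^r)$; equivalently it is the number $f^\rho$ of standard Young tableaux of rectangular shape with $r$ rows and $s$ columns.

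Finally I would evaluate $f^\rho$ by the hook length formula $f^\rho=(rs)!\,/\,\prod_{\square\in\rho}h(\square)$. The hook at the box in row $i$, column $j$ of the rectangle has length $(r-i)+(s-j)+1$, so fixing a row and letting the column vary gives the factor $\prod_{a=1}^{s}\bigl((r-i)+a\bigr)=(r-i+s)!\,/\,(r-i)!$, and multiplying over the rows yields $\prod_{\square}h(\square)=\prod_{t=0}^{r-1}(t+s)!\,/\,t!=\bigl(s!(s+1)!\cdots(s+r-1)!\bigr)\,/\,\bigl(0!\,1!\cdots(r-1)!\bigr)$. Thus $\deg\grass(r,s)=(rs)!\cdot 0!\,1!\cdots(r-1)!\,/\,\bigl(s!(s+1)!\cdots(s+r-1)!\bigr)$; a short factorial manipulation (or equivalently the self-duality $\grass(r,s)\cong\grass(s,r)$ noted in the text) rewrites this as $(rs)!\cdot 0!\,1!\cdots(s-1)!\,/\,\bigl(r!(r+1)!\cdots(r+s-1)!\bigr)$, which is the asserted expression.

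The step that most needs care is the passage from the scheme-theoretic degree of the embedded variety to the intersection number $\langle\sigma_1^{rs},\mathrm{pt}\rangle$: this rests on $\grass(r,s)$ being smooth and irreducible of the expected dimension (the first paragraph), on its Chow/cohomology ring being freely generated by Schubert classes with Pieri's rule governing multiplication by $\sigma_1$, and on the Pl\"ucker hyperplane class being precisely $\sigma_1$ (the ample generator). Granting those standard facts, everything else is the purely combinatorial hook-length computation, whose only real pitfall is keeping the factorials straight — and the symmetry $r\leftrightarrow s$ provides a convenient consistency check.
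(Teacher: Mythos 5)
The paper does not prove this lemma at all: it is quoted verbatim from Manivel's book (the citation to Corollary~3.2.14 and Subsection~3.1.1 of that reference) and marked with a \qed, so there is no in-paper argument to compare against. Your proof is the standard one and is correct: the affine charts $U_S\cong\mathbb{A}^{rs}$ give the dimension; the Pl\"ucker hyperplane class is the special Schubert class $\sigma_1$, so the degree is $\langle\sigma_1^{rs},\mathrm{pt}\rangle$, which by iterated Pieri counts standard Young tableaux of the $r\times s$ rectangle; and the hook-length formula plus the $r\leftrightarrow s$ symmetry of that count yields exactly the displayed factorial expression (a sanity check: for $r=2$, $s=3$ it gives $6!/144=5$, the degree of $\grass(2,3)\subset\PG(9,\field)$). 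This is essentially the proof found in the cited source, so you have reconstructed the reference rather than diverged from the paper. Two minor points of care: over a field that need not be $\mathbb{C}$ one should phrase the Schubert calculus in the Chow ring rather than singular cohomology (the basis theorem and Pieri's rule hold there over any field), and the notion of degree being used downstream in the paper is the one guaranteeing that a linear section of complementary codimension over an algebraically closed field is nonempty, which your intersection-theoretic definition does deliver.
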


Remember that a projective algebraic variety $\grass\subset\prospace$ of \emph{dimension $n$} and a projective subspace $S\le\prospace$ of \mbox{\emph{co-dimension $n$}} always meet over an algebraically closed field.

\begin{theorem}
Over algebraically closed field $\field$, if the set $\higpig$ of $(r-1)$-subspaces in $\PG(r+s-1,\field)$ has at most $r\cdot s$ elements, then there exists a subspace $W$ in $\PG(r+s-1,\field)$ of co-dimension $s$ that meets each element of $\higpig$, and thus, $\higpig$ is \emph{not} an $(r-1)$-generator set.
\end{theorem}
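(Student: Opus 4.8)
The plan is to deduce the statement directly from the last lemma of the previous subsection --- the one translating ``there is a co-dimension-$r$ subspace meeting every $H_i$'' into ``the linear subspace $\{\mathbf H(1)\}^\bot\cap\dots\cap\{\mathbf H(N)\}^\bot$ meets $\grass(s,r)$'' --- together with the quoted value $\dim\grass(r,s)=rs$, so that the whole argument collapses to a single dimension inequality over an algebraically closed field. Write $\higpig=\{H_1,\dots,H_N\}$ with $N\le rs$, fix a Pl\"ucker coordinate vector $\mathbf H(i)\in\bigwedge^{r}\field^{r+s}$ of each $H_i$, and set $S=\{\mathbf H(1)\}^\bot\cap\dots\cap\{\mathbf H(N)\}^\bot$ inside $\PG\bigl(\binom{r+s}{r}-1,\field\bigr)$. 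By that lemma it suffices to show $S\cap\grass(s,r)\ne\emptyset$: then there is a subspace $W$ of co-dimension $r$ (equivalently of rank $s$, i.e.\ of projective dimension $s-1$) meeting every element of $\higpig$.

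For the intersection I would argue purely by dimensions. Being an intersection of $N$ hyperplanes, $S$ is a linear subspace of co-dimension at most $N\le rs$. By the dimension formula, $\grass(s,r)$ is a non-empty projective variety of dimension $s\cdot r=rs$. Over an algebraically closed field a projective variety of dimension $n$ meets every linear subspace of co-dimension at most $n$ (if one wants the fact exactly in the form recalled just above, replace $S$ by a linear subspace of co-dimension precisely $rs$ sitting inside it --- possible since $rs\le\binom{r+s}{r}-1$). With $n=rs$ this gives $S\cap\grass(s,r)\ne\emptyset$, hence the transversal $W$ exists.

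Finally, an algebraically closed field is infinite, so $|\higpig|=N\le|\field|$ holds trivially and Corollary~\ref{cor:ekv} applies with $k=r-1$: a set of $(r-1)$-subspaces generates all co-$(r-1)$-subspaces if and only if no subspace of co-dimension $r$ meets all of its members. Since $W$ is such a subspace, $\higpig$ is \emph{not} an $(r-1)$-generator set. I do not expect a genuine obstacle: the content is already packaged into the Pl\"ucker lemma and into $\dim\grass=rs$, and the only things to check are that a linear dependence among the $\mathbf H(i)$ only enlarges $S$ (so causes no harm) and that $N=rs$ is exactly the borderline value --- which is why the resulting bound is sharp, matching the $(rs+1)$-element higgledy-piggledy families constructed in the last section.
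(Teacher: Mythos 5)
Your proposal is correct and follows essentially the same route as the paper: both pass to Pl\"ucker coordinates, observe that $S=\{\mathbf H(1)\}^\bot\cap\dots\cap\{\mathbf H(N)\}^\bot$ has co-dimension at most $N\le rs$, and invoke the fact that over an algebraically closed field the $rs$-dimensional Grassmannian must meet such a linear subspace. The only cosmetic difference is that the paper phrases the non-emptiness via $\deg\grass(r,s)\ge1$, while you cite the intersection fact directly and handle the ``co-dimension at most $rs$'' refinement explicitly.
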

\begin{proof}
Suppose that $\higpig=\{H_1,\dots,H_N\}$ has $N\le r\cdot s$ elements. If the subspace $H_i$ of rank $r$ is co-ordinatized by the homogeneous Pl{\"u}cker coordinate vector $\mathbf{H}(i)\in\PG(\binom{r+s}{r}-1,\field)$, then the Pl{\"u}cker coordinate vectors of co-$s$-subspaces meeting $H_i$ are the elements of the hyperplane $\{\mathbf{H}(i)\}^\bot<\PG(\binom{r+s}{r}-1,\field)$ orthogonal to the vector $\mathbf{H}(i)$.

The subspace
$\{\mathbf{H}(1)\}^{\bot}\cap\dots\cap\{\mathbf{H(N)}\}^{\bot}$ has co-di\-men\-sion at most $N\le r\cdot s$ in $\PG(\binom{r+s}{r}-1,\field)$ since it is the intersection of $N\le r\cdot s$ hyperplanes. The Gra{ss}mannian $\grass(r,s)$ of the $s$-co-dimensional subspaces of $\PG(r+s-1,\field)$ has dimension $r\cdot s$ and its degree $\deg\grass(r,s)$ is positive.

Thus, $\{\mathbf{H}(1)\}^{\bot}\cap\dots\cap\{\mathbf{H(N)}\}^{\bot}\cap\grass(s,r)$ contains at least $\deg\grass(r,s)\ge1$ elements, which are subspaces of co-dimension $r$ meeting all the subspaces in $\higpig$.
\end{proof}

So, using projective parameters $d=r+s-1$, $k=r-1$, $r\cdot s=(k+1)\cdot(d-k)$, we have shown that over algebraically closed field $\field$, the set $\higpig$ of higgledy-piggledy $k$-subspaces in $\PG(d,\field)$ has to contain at least $(k+1)\cdot(d-k)+1$ elements.


\section{Constructions based on the moment curve}

Let $\{(1,t,t^2,\dots,t^d):t\in\field\}\cup\{(0,0,0,\dots,1)\}\subset\PG(d,\field)$ be the moment curve (rational normal curve) and let $a(t)=(1,t,t^2,t^3\dots,t^d)$ denote the coordinate vectors of the points of the moment curve. In this section we investigate collections $\{H(t)\;|\;t\in\field\}$ of linear subspaces of rank $r$ (projective subspaces of dimension $k=r-1$) in the vector space $\field^{r+s}$ (in $\PG(d,\field)$ where $d=r+s-1$).

At first, we give a general description for the particular constructions given later in Subsection~\ref{subs:tang}, in Subsection~\ref{subs:div} and in Subsection~\ref{subs:sec}.

The subspace $H(t)$ is co-ordinatized by the Pl{\"u}cker coordinate vector $\mathbf{H}(t)=a^{[0]}(t)\wedge a^{[1]}(t)\wedge\dots\wedge a^{[k]}(t)$, where the $i$-th coordinate of the vector $a^{[n]}(t)$ is $a^{[n]}_i(t)=h(i,n)\cdot t^{i-n}$, where $h(i,n)\in\field$ is independent from $t$ and $\forall i:h(i,0)=1$, thus $a^{[0]}(t)=a(t)$. (If $h(i,n)\neq0$ for some $i<n$, then the case $t=0$ shall be handled separately.)

The Pl{\"u}cker coordinates of $H(t)$ are the $r\times r$ subdeterminants of the following matrix.
\begin{equation*}
\begin{bmatrix}
1&t&t^2&\dots&t^{r-1}&\dots&t^d\\
h(0,1)\frac{1}{t}&h(1,1)&h(2,1)t&\dots&h(r-1,1)t^{r-2}&\dots&h(d,1)t^{d-1}\\
h(0,2)\frac{1}{t^2}&h(1,2)\frac{1}{t}&h(2,2)&\dots&h(r-1,2)t^{r-3}&\dots&h(d,2)t^{d-2}\vphantom{\Big|}\\
\vdots&\vdots&\vdots&\ddots&\vdots&&\vdots\\
h(0,k)\frac{1}{t^k}&h(1,k)\frac{t}{t^{k}}&h(2,k)\frac{t^2}{t^{k}}&\dots&h(k,k)&\dots&h(d,k)t^{d-r+1}\\
\end{bmatrix}
\end{equation*}
Choosing the $i_1$-th,\dots,$i_r$-th coloumns, $\frac{t^{i_1}t^{i_2}\cdots t^{i_r}}{t^{0}t^{1}\cdots t^k}$ can be
separated, and we get
\begin{align*}
H_{i_1,\dots,i_r}(t)&=\frac{t^{i_1}\cdot t^{i_2}\cdot\cdots\cdot t^{i_r}}{t^{0}\cdot t^{1}\cdot\cdots\cdot t^k}\cdot\det\begin{bmatrix}
1&1&\dots&1\\
h(i_1,1)&h(i_2,1)&\dots&h(i_r,1)\\
\vdots&\vdots&\ddots&\vdots\\
h(i_1,k)&h(i_2,k)&\dots&h(i_r,k)\\
\end{bmatrix}=\\
&=t^{i_1+i_2+\dots+i_r-\binom{r}{2}}\cdot h(i_1,i_2,\dots,i_r)
\end{align*}
using that $0+1+2+\dots+k=\frac{(k+1)k}{2}=\frac{r(r-1)}{2}=\binom{r}{2}$ and using the abbreviation $h(i_1,i_2,\dots,i_r)$.

Since $h(i_1,i_2,\dots,i_r)\cdot t^{i_1+i_2+\dots+i_r-\binom{r}{2}}$ is not meaningless for $t=0$ (even if $h(i,n)\neq0$ for some $i<n$), we should only see that the vector co-ordinatized by the coordinates $H_{i_1,\dots,i_r}(0)=h(i_1,i_2,\dots,i_r)\cdot 0^{i_1+i_2+\dots+i_r-\binom{r}{2}}$ is a totally decomposable multivector. One can see that $H_{i_1,\dots,i_r}(0)=0$ if $\{i_1,\dots,i_r\}\neq\{0,1,\dots,r-1\}$, and $H_{0,1,\dots,r-1}(0)=h(0,1,\dots,r-1)$. If $h(0,1,\dots,r\!-\!1)$ is nonzero then $H(0)=e_0\wedge\dots\wedge e_{r-1}$, where $e_i$ is the standard basis vector $[0,\dots,1,\dots,0]$. Thus, if $h(i,n)\neq0$ for some $i<n$ then we extend the set $\{H(t)\;|\;t\in\field\setminus\{0\}\}$ by the element $H(0)=e_0\wedge\dots\wedge e_{r-1}$.

So, we will deal with sets $\{H(t):t\in\field\}$ of subspaces where each subspace $H(t)$ is co-ordinatized by the Pl{\"u}cker coordinates $H_{i_1,\dots,i_r}(t)=h(i_1,\dots,i_r)\cdot t^{i_1+\dots+i_r-\binom{r}{2}}$, where $h(i_1,\dots,i_r)\in\field$ is independent from $t$. At first, we have a lemma about such sets.

\begin{lemma}\label{lem:ind}
Suppose that $\forall t\in\field:H_{i_1,\dots,i_r}(t)=h(i_1,\dots,i_r)\cdot t^{i_1+\dots+i_r-\binom{r}{2}}$ are the Pl{\"u}cker coordinates of the subspace $H(t)<\field^{r+s}$ and suppose that $|\field|>r\cdot s$. There does \emph{not} exist any subspace $W<\field^{r+s}$ of co-dimension $r$ (of rank $s$) meeting each $H(t)$ non-trivially \emph{if and only if} $h(i_1,\dots,i_r)\neq0$ for each $r$-tuple $i_1,\dots,i_r$.
\end{lemma}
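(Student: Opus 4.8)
The plan is to prove the two directions separately, and to exploit the structure of the Plücker coordinates $H_{i_1\dots i_r}(t)=h(i_1,\dots,i_r)\cdot t^{i_1+\dots+i_r-\binom{r}{2}}$ by organizing everything according to the \emph{weight} $N=i_1+\dots+i_r$ of a coordinate. Note that the possible weights range over $\binom{r}{2}=0+1+\dots+(r-1)\le N\le (r+s-1)+(r+s-2)+\dots+s=\binom{r}{2}+rs$, so after subtracting the shift $\binom{r}{2}$ we get exponents of $t$ ranging in $\{0,1,\dots,rs\}$, which matches the dimension bound $rs$ appearing in the hypothesis $|\field|>r\cdot s$.

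For the ``if'' direction (assume all $h(i_1,\dots,i_r)\neq0$; show no blocking $W$ of rank $s$ exists): suppose for contradiction that some rank-$s$ subspace $W$ meets every $H(t)$ nontrivially. By the duality lemma (the one stating $H\cap W\neq\{0\}\Leftrightarrow\langle\mathbf{W^\star}|\mathbf{H}\rangle=0$), this means the dual Plücker vector $\mathbf{W^\star}$ satisfies $\langle\mathbf{W^\star}|\mathbf{H}(t)\rangle=0$ for all $t\in\field$. Writing this scalar product out, $\sum_{i_1<\dots<i_r}W^\star_{i_1\dots i_r}\,h(i_1,\dots,i_r)\,t^{i_1+\dots+i_r-\binom{r}{2}}=0$ is a polynomial identity in $t$ of degree at most $rs$; since $|\field|>rs$ it vanishes identically, so for \emph{each} weight value $N$ the coefficient $\sum_{i_1+\dots+i_r=N} W^\star_{i_1\dots i_r}\,h(i_1,\dots,i_r)=0$. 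The heart of the argument will be to bootstrap from these ``weight-graded'' linear equations, together with the Plücker relations supplied by Lemma~\ref{lem:szummas}, to force $\mathbf{W^\star}=0$, contradicting that it is a genuine (nonzero) Plücker vector. Concretely: the smallest weight is $N=\binom{r}{2}$, achieved only by $(0,1,\dots,r-1)$, so the weight-$\binom{r}{2}$ equation reads $W^\star_{0,1,\dots,r-1}\,h(0,1,\dots,r-1)=0$, hence $W^\star_{0,1,\dots,r-1}=0$. One then induces on $N$: assuming all dual coordinates of weight $<N$ vanish, take any two index tuples $(i_1,\dots,i_r)\neq(j_1,\dots,j_r)$ of weight $N$; Lemma~\ref{lem:szummas} gives a Plücker relation $W^\star_{i_1\dots i_r}W^\star_{j_1\dots j_r}+\Sigma=0$ where every term of $\Sigma$ is a product of a coordinate of weight $<N$ with one of weight $>N$, hence $\Sigma=0$ by the inductive hypothesis, so $W^\star_{i_1\dots i_r}W^\star_{j_1\dots j_r}=0$. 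This means among the weight-$N$ dual coordinates at most one is nonzero; plugging that into the weight-$N$ linear equation $\sum_{i_1+\dots+i_r=N}W^\star_{i_1\dots i_r}h(i_1,\dots,i_r)=0$ and using $h\neq 0$ kills that last one too. Thus all dual Plücker coordinates vanish, the desired contradiction. (The symmetric roles of $\grass(r,s)$ and $\grass(s,r)$ in the statement mean we may equally run this with the roles of $W$ and $H$ interchanged; the argument is identical after swapping ``weight $N$'' for ``weight $rs+\binom{r}{2}-N$''.)

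For the ``only if'' direction (contrapositive): suppose some $h(i_1,\dots,i_r)=0$, and produce a blocking $W$ of rank $s$. The idea is to pick the \emph{minimal} weight $N_0$ for which some weight-$N_0$ coefficient $h$ vanishes — or more robustly, to exhibit a single nonzero dual Plücker vector $\mathbf{W^\star}$ orthogonal to all $\mathbf{H}(t)$. A clean way: if $h(i_1,\dots,i_r)=0$ then the coordinate functional picking out $W^\star_{i_1\dots i_r}$ is ``free'' in the weight-graded equation of weight $N=i_1+\dots+i_r$; one wants the standard basis dual vector $\mathbf{W^\star}=e_{i_1}\wedge\dots\wedge e_{i_r}$ (coordinatizing the coordinate subspace spanned by $e_{i_1},\dots,e_{i_r}$, which is a genuine decomposable multivector hence lies on $\grass(s,r)$) to be orthogonal to every $\mathbf{H}(t)$. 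Indeed $\langle e_{i_1}\wedge\dots\wedge e_{i_r}\mid \mathbf{H}(t)\rangle = H_{i_1\dots i_r}(t)=h(i_1,\dots,i_r)t^{\,i_1+\dots+i_r-\binom{r}{2}}=0$ identically, so this $W^\star$ works and the corresponding rank-$s$ subspace $W=(W^\star\text{'s subspace})^\bot$ meets every $H(t)$. This direction is short.

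The main obstacle is the induction on weight in the ``if'' direction — specifically making sure that every relevant pair of weight-$N$ index tuples is actually reachable by \emph{some} Plücker relation of the form guaranteed by Lemma~\ref{lem:szummas}, and that the error term $\Sigma$ there really only involves strictly-smaller-and-strictly-larger weights (so that the inductive hypothesis annihilates it). Lemma~\ref{lem:szummas} is stated for exactly this purpose, so the work is just to verify the hypotheses of that lemma apply for each pair — in particular that the weights stay in range $\binom{r}{2}\le N\le\binom{r}{2}+rs$ and that $(i_1,\dots,i_r)\neq(j_1,\dots,j_r)$, which is automatic. A secondary technical point is the base and boundary of the induction (the unique minimal-weight and maximal-weight tuples), which is handled directly as above. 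Everything else is bookkeeping with the degree-$rs$ polynomial identity and the hypothesis $|\field|>rs$.
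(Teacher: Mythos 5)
Your proposal is correct and follows essentially the same route as the paper's own proof: the weight-graded polynomial identity in $t$ (using $|\field|>rs$ to extract one linear equation per weight), induction on the weight using Lemma~\ref{lem:szummas} to force at most one nonzero dual Pl\"ucker coordinate per weight level and then the linear equation with $h\neq0$ to kill it, and for the converse the coordinate subspace $e_{i_1}\wedge\dots\wedge e_{i_r}$ as the blocking $W$. No substantive differences to report.
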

\begin{proof}
At first, suppose that $h(i_1,\dots,i_r)\neq0$ for each $r$-tuple $i_1,\dots,i_r$ and suppose to the contrary that there exists a subspace $W$ of {co-di}\-men\-sion $r$ meeting each subspace $H(t)$. Let $W^\star_{i_1,\dots,i_r}$ ($0\le i_1<\dots<i_r\le d$) denote the \emph{dual} Pl{\"u}cker coordinates of $W$. For these Pl{\"u}cker coordinates we have Pl{\"u}cker relations
\begin{equation*}
\sum_{n=0}^{r}(-1)^nW^\star_{i_1\dots i_{r-1}j_n}\cdot W^\star_{j_0\dots\hat{j_n}\dots j_r}=0
\end{equation*}
for each $2r$-tuple $i_1,\dots,i_{r-1},j_0\dots j_r$ of indices.

The indirect assumpion means that
\begin{align*}
\sum_{i_1<\dots<i_r}W^\star_{i_1,\dots,i_r}\cdot H_{i_1,\dots,i_r}(t)=
\!\sum_{N=\binom{r}{2}}^{r\cdot d-\binom{r}{2}}t^{N-\binom{r}{2}}
\!\!\sum_{\substack{i_1+\dots+i_r=N\\i_1<\dots<i_r}}
\!\!h(i_1,\dots,i_r)\cdot W^\star_{i_1,\dots,i_r}
=0
\end{align*}
for all $t\in\field$.
Since the field $\field$ has more than $r\cdot s$ elements, this polynomial above can vanish on each element of $\field$ if only if its each coefficient is zero. So we have $r\cdot s+1$ new (linear) equations for the dual Pl{\"u}cker coordinates of $W$:
\begin{align*}
h(0,1,\dots,r-1)\cdot W^\star_{0,1,\dots,r-1}&=0\mbox{\tag{$N=\binom{r}{2}$}}\\
&\;\;\vdots\\
\vphantom{\sum_{N=\binom{r}{2}}^{r\cdot d-\binom{r}{2}}t^{N-\binom{r}{2}}}
\sum_{\substack{i_1+\dots+i_r=N\\ i_1<\dots<i_r}}h(i_1,\dots,i_r)\cdot W^\star_{i_1,\dots,i_r}&=0\mbox{\tag{$N$}}\\
&\;\;\vdots\\
h(d-r+1,\dots,d)\cdot W^\star_{d-r+1,\dots,d}&=0\mbox{\tag{$N=r\cdot d-\binom{r+1}{2}$}}\\
\end{align*}
Notice that in equation $(N)$ the sum of indices of each dual Pl{\"u}cker coordinate equals to $N$.

Suppose by induction that for each $r$-tuple $i_1,\dots,i_r$ if $i_1+\dots+i_r\le K$ then $W^\star_{i_1,\dots,i_r}=0$. The first equation then says that $W^\star_{0,1,\dots,r-1}=0$ so the base of induction holds for $K=\binom{r}{2}$.

Consider the dual Pl{\"u}cker coordinates $W^\star_{i_1,\dots,i_r}$ where $i_1+\dots+i_r=K+1$. These dual Pl{\"u}cker coordinates occour in Equation $(N=K+1)$:
\begin{align*}
\sum_{\substack{i_1+\dots+i_r=K+1\\ i_1<\dots<i_r}}h(i_1,\dots,i_r)\cdot W^\star_{i_1,\dots,i_r}=0
\end{align*}
Lemma~\ref{lem:szummas} says that for each pair $(W^\star_{i_1,\dots,i_r},W^\star_{j_1,\dots,j_r})$ of these dual Pl{\"u}cker coordinates above ($i_1+\dots+i_r=K+1=j_1+\dots+j_r$, where $\{i_1,\dots,i_r\}\neq\{j_1,\dots,j_r\}$, there exists a  a Pl{\"u}cker relation that has the form
\begin{equation*}
W^\star_{i_1,\dots,i_r}W^\star_{j_1,\dots,j_r}+\Sigma=0
\end{equation*}
where $\Sigma$ is the sum of some products $W_{k_1,\dots,k_r}H_{\ell_1,\dots,\ell_r}$, where $k_1+\dots+k_r<K+1<\ell_1+\dots+\ell_r$, and thus, using the assumption $i_1+\dots+i_r\le K\Rightarrow W^\star_{i_1,\dots,i_r}=0$, we get
\begin{equation*}
W^\star_{i_1,\dots,i_r}W^\star_{j_1,\dots,j_r}=0
\end{equation*}
for each pair $(W^\star_{i_1,\dots,i_r},W^\star_{j_1,\dots,j_r})$. These quadratic equations concludes that all $W^\star_{i_1,\dots,i_r}$ (where $i_1+\dots+i_r=K+1$) should be zero except one. And the linear Equation~($N=K+1$) says that this one cannot be exception either.

\paragraph{}
So we have proved that each dual Pl{\"u}cker coordinate of the subspace $W$ of co-dimension $r$ should be zero, that is a contradiction, since Pl{\"u}cker coordinates are homogeneous.

\paragraph{Opposite direction}
Suppose that $h(i_1,\dots,i_r)=0$ for a suitable $r$-tuple $i_1,\dots,i_r$ and consider the subspace $W$ of co-dimension $r$ co-ordinatized by the following dual Pl{\"u}cker coordinates. Let $W^\star_{i_1,\dots,i_r}=1$ for the $r$-tuple $i_1,\dots,i_r$ above, and let $W^\star_{j_1,\dots,j_r}=0$ for the other $r$-tuples of indices.
\begin{align*}
\sum_{j_1<\dots<j_r}W^\star_{j_1,\dots,j_r}\cdot H_{j_1,\dots,j_r}(t)=W^\star_{i_1,\dots,i_r}\cdot H_{i_1,\dots,i_r}(t)=1\cdot0\cdot t^{i_1+\dots+i_r-\binom{r}{2}}=0
\end{align*}
Thus, $W$ meets each $H(t)$ non-trivially.
\end{proof}

\begin{corollary}\label{cor:ind}
Suppose that $|\field|>r\cdot s$. The collection $\{H(t)\;|\;t\in\field\}$ of subspaces coordinatized by $H_{i_1,\dots,i_r}(t)=h(i_1,\dots,i_r)\cdot t^{i_1+\dots+i_r-\binom{r}{2}}$ is an $r$-uniform weak $(s,r\cdot s)$ subspace design \emph{if and only if} there does \emph{not} exists a subspace $W$ of rank $s$ (co-dimension $r$) meeting each $H(t)$ non-trivially, that is, if and only if $h(i_1,\dots,i_r)\neq0$ for each $r$-tuple $i_1,\dots,i_r$.
\end{corollary}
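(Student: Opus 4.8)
The second equivalence in the statement --- that no subspace $W$ of rank $s$ meets every $H(t)$ if and only if $h(i_1,\dots,i_r)\neq0$ for every $r$-tuple $i_1,\dots,i_r$ --- is precisely Lemma~\ref{lem:ind}, so I would only have to treat the first equivalence. For that, the plan is to convert, for each fixed $W$, the incidence condition $H(t)\cap W\neq\{0\}$ into the vanishing of a single univariate polynomial in $t$, and then to count its roots.

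Concretely, I would fix a subspace $W<\field^{r+s}$ of rank $s$ with dual Pl\"ucker coordinates $W^\star_{i_1\dots i_r}$ and apply the criterion proved above, namely $H\cap W\neq\{0\}\Leftrightarrow\langle\mathbf{W^\star}|\mathbf{H}\rangle=0$, to write
\begin{equation*}
\langle\mathbf{W^\star}|\mathbf{H}(t)\rangle=\sum_{i_1<\dots<i_r}W^\star_{i_1\dots i_r}\,h(i_1,\dots,i_r)\,t^{i_1+\dots+i_r-\binom{r}{2}},
\end{equation*}
and call the right-hand side $P_W(t)\in\field[t]$. The key observation is then the degree bound: as $\{i_1<\dots<i_r\}$ ranges over the $r$-subsets of $\{0,\dots,r+s-1\}$, the index sum $i_1+\dots+i_r$ lies between its minimum $\binom{r}{2}$ and its maximum $\binom{r}{2}+r\cdot s$, so every exponent occurring in $P_W$ lies in $\{0,1,\dots,r\cdot s\}$ and hence $\deg P_W\le r\cdot s$. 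This bound is exactly what produces the parameter $A=r\cdot s$, and I expect it to be the only genuinely nonroutine point of the proof.

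It then remains to assemble the two directions. If no rank-$s$ subspace meets every $H(t)$, then for an arbitrary rank-$s$ subspace $W$ some $t\in\field$ satisfies $H(t)\cap W=\{0\}$, so $P_W$ is not the zero polynomial, hence has at most $r\cdot s$ roots in $\field$; thus $W$ meets $H(t)$ nontrivially for at most $r\cdot s$ values of $t$, and $\{H(t)\mid t\in\field\}$ is an $r$-uniform weak $(s,r\cdot s)$ subspace design. For the converse, if some rank-$s$ subspace $W$ meets every $H(t)$, then because $|\field|>r\cdot s$ it meets $H(t)$ nontrivially for more than $r\cdot s$ values of $t$, so the collection is not such a design. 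The hypothesis $|\field|>r\cdot s$ is used only in this converse step (and, through Lemma~\ref{lem:ind}, in the second equivalence); everything else, once $P_W$ and its degree bound are in hand, is bookkeeping.
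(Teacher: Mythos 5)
Your proposal is correct and matches the paper's argument: the paper likewise reduces the first equivalence to root-counting for the polynomial $\sum_{i_1<\dots<i_r}W^\star_{i_1\dots i_r}H_{i_1\dots i_r}(t)$, whose degree is at most $r\cdot s$, and invokes $|\field|>r\cdot s$ for the converse, while the second equivalence is delegated to Lemma~\ref{lem:ind} exactly as you do.
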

\begin{proof}
If $\{H(t)\;|\;t\in\field\}$ is \emph{not} an $r$-uniform weak $(s,r\cdot s)$ subspace design then $\exists W<\field^{r+s}$ ($\rank W=s$) such that $W$ meets \emph{more than} $r\cdot s$ elements $H(t)$ non-trivially. In this case the polynomial $\sum_{j_1<\dots<j_r}W^\star_{j_1,\dots,j_r}\cdot H_{j_1,\dots,j_r}(t)$ has \emph{more than} $r\cdot s$ roots, but its degree is $r\cdot s$, thus this must be the zero polynomial, and thus, $w$ meets \emph{all the elements} $H(t)$ non-trivially.

If $\{H(t)\;|\;t\in\field\}$ is an $r$-uniform weak $(s,r\cdot s)$ subspace design then $\forall W<\field^{r+s}$ of $\rank W=s$ meets at most $r\cdot s$ elements $H(t)$, and since $|\field|>r\cdot s$,  $\exists H(t)$ that meets $W$ only in the zero vector.
\end{proof}

Thus, if $H_{i_1,\dots,i_r}(t)=h(i_1,\dots,i_r)\cdot t^{i_1+\dots+i_r-\binom{r}{2}}$ and if we know that $\{H(t)\;|\;t\in\field\}$ is a weak $(s,A)$ subspace design with parameter $r\cdot s<A<|\field|$, then this corollary above prove that $\{H(t)\;|\;t\in\field\}$ is an $r$-uniform weak $(s,r\cdot s)$ subspace design, moreover, $h(i_1,\dots,i_r)\neq0$ for each $r$-tuple $i_1,\dots,i_r$. This will be used to show that known constructions has better (smaller) parameter $A$ than had been proved.

\subsection{Dual constructions}

Since we also will investigate constructions of {Guruswami and Kopparty}~\cite{VGSK}, we now give the connection between the techniques used in~\cite{VGSK} and the technique shown above.

Consider the collection $\{H(t)\;|\;t\in\field\}$ of subspaces co-ordinatized by the Pl{\"u}cker coordinate vectors $\mathbf{H}(t)=a^{[0]}(t)\wedge a^{[1]}(t)\wedge\dots\wedge a^{[k]}(t)$. Then the orthogonal complementary subspace of $H(t)$ is the intersection of hyperplanes: $H(t)^{\bot}=\{a^{[0]}(t)\}^{\bot}\cap\{a^{[1]}(t)\}^{\bot}\cap\dots\cap\{a^{[k]}(t)\}^{\bot}$.

The co-vector $b=[b_0,\dots,b_d]\in(\field^{d+1})^*$ is perpendicular to the coordinate vector $a^{[n]}(t)\in\field^{d+1}$ if and only if $\sum_{j=i}^{d}b_j\cdot h(i,n)\cdot t^{i-n}=0$. This motivates the following notations.
\begin{not*}
For the coordinate vector $z=[z_0,z_1,\dots,z_d]\in\field^{d+1}$ let $P_z(X)=\sum_{j=0}^{d}z_jX^j\in\field[X]\vphantom{\Big|}$ denote a univariate polynomial of degree at most $d$, and let $P_z^{[n]}(X)=\sum_{j=i}^{d}z_j\cdot h(n,i)\cdot X^{j-n}\in\field[X]$. Note that $P_z^{[0]}(X)=P_z(X)$.
\end{not*}
\begin{remark}
Note that $P_z^{[n]}(X)$ is a rational function, moreover, it is the quotient of a polynomial of degree $d$ with $X^n$. The function $P_z^{[n]}(X)$ is a polynomial (of degree $d-n$) if and only if $\forall i<n:h(i,n)=0$.
\end{remark}
The co-vector $b=[b_0,\dots,b_d]\in(\field^{d+1})^*$ is perpendicular to the homogeneous coordinate vector $a^{[n]}(t)\in\field^{d+1}$ if and only if $P_b^{[n]}(t)=0$, thus the annihillator subspace $H(t)^{\bot}=a^{[0]}(t)^{\bot}\cap a^{[1]}(t)^{\bot}\cap\dots\cap a^{[k]}(t)^{\bot}$ equals to the subspace $\left\{b\in(\field^{d+1})^*\;\Big|\;P_b^{[n]}(t)=0:\forall n=0,1,\dots,k\right\}\le(\field^{d+1})^*$.

Let the arbitrary linear subspace $W\le\field^{d+1}$ of rank $s$ be fixed and consider the set of polynomials $\left\{P_b(X)\;\Big|\;b\in W^{\bot}\right\}\subset\field[X]$. This subset is a linear subspace of $\field[X]$ and it is isomorphic to $W^{\bot}$ via the linear map $b\mapsto P_b(X)$. The constructions of {Guruswami and Kopparty}~\cite{VGSK} are based on this isomorphism.

Let $\{b(1),\dots,b(r)\}\subset(\field^{d+1})^*$ be a basis of $W^{\bot}$ and, using Gau{ss}ian elimination, without loss of generality we can suppose that the last $j-1$ coordinates of $b(j)$ are zero, that is, $\deg P_{b(j)}(X)\le d-j+1$.

The following matrix
\begin{equation*}
M(X)=\begin{bmatrix}
P_{b(1)}(X)&P_{b(2)}(X)&\dots&P_{b(r)}(X)\\
P_{b(1)}^{[1]}(X)&P_{b(2)}^{[1]}(X)&\dots&P_{b(r)}^{[1]}(X)\\
\vdots&\vdots&\ddots&\vdots\\
P_{b(1)}^{[k]}(X)&P_{b(2)}^{[k]}(X)&\dots&P_{b(r)}^{[k]}(X)\\
\end{bmatrix}
\end{equation*}
is quadratic since $r=k+1$. The linear map $b\mapsto P_b(t)$ maps the subspace $H(t)^{\bot}\cap W^{\bot}$ to the kernel of $M(t)$. If $h(i,n)=0:\forall i<n$ then this matrix is a polynomial matrix (each element is a univariate polynomial), thus, its determinant is also a polynomial.

\begin{lemma}\label{lem:matrix}
Let $M(X)\in\field[X]^{m\times m}$ be an $m\times m$ matrix of polynomials. For each element $t\in\field$, if $\det M(t)=0$ then $t$ is the root of the polynomial $\det M(x)$ of multiplicity at least $R=\rank\ker(M(t))$.
\end{lemma}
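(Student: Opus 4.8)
The plan is to reduce to a situation where $R$ copies of the factor $(X-t)$ can be extracted from $\det M(X)$ by a single \emph{constant} (i.e.\ $X$-independent) column operation dictated by the kernel of the evaluated matrix $M(t)$. Recall that here $R=\rank\ker M(t)$ means the nullity $m-\rank M(t)$.

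First I would choose a basis $v_1,\dots,v_R$ of $\ker M(t)\le\field^m$ and extend it to a basis $v_1,\dots,v_m$ of $\field^m$; let $V\in\field^{m\times m}$ be the invertible constant matrix whose columns are these vectors, so that $\det\bigl(M(X)V\bigr)=\det V\cdot\det M(X)$ with $\det V\in\field^\times$. The $j$-th column of $M(X)V$ is the polynomial vector $M(X)v_j$. For $j\le R$ we have $M(t)v_j=0$, so every entry of $M(X)v_j$ is a polynomial in $\field[X]$ vanishing at $X=t$, hence divisible by $X-t$; thus $c_j(X):=M(X)v_j/(X-t)$ is again a vector of polynomials for $j\le R$, while for $j>R$ I simply set $c_j(X):=M(X)v_j$. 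Collecting the $c_j(X)$ as the columns of a polynomial matrix $\widetilde M(X)\in\field[X]^{m\times m}$ gives
\[
M(X)\,V=\widetilde M(X)\cdot\mathrm{diag}\bigl(\underbrace{X-t,\dots,X-t}_{R},1,\dots,1\bigr).
\]
Taking determinants yields $\det V\cdot\det M(X)=(X-t)^R\cdot\det\widetilde M(X)$, and since $\det\widetilde M(X)\in\field[X]$ and $\det V$ is a nonzero constant, $(X-t)^R$ divides $\det M(X)$, which is exactly the assertion that $t$ is a root of $\det M(X)$ of multiplicity at least $R$.

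I do not expect a genuine obstacle here: the only points needing care are that passing from $M(X)$ to $M(X)V$ changes the determinant only by the nonzero scalar $\det V$, and that dividing a polynomial that vanishes at $t$ by $X-t$ stays inside $\field[X]$, so that $\widetilde M(X)$ is honestly a polynomial matrix (which is what lets us conclude $\det\widetilde M(X)\in\field[X]$). As an alternative route one could invoke the Smith normal form $M(X)=U(X)D(X)V(X)$ over the PID $\field[X]$ with $U,V$ unimodular and $D=\mathrm{diag}(d_1,\dots,d_m)$, $d_1\mid\cdots\mid d_m$: then $U(t),V(t)$ are invertible over $\field$, hence $R=m-\rank M(t)=\#\{i:d_i(t)=0\}$, and each such $d_i(X)$ contributes a factor $X-t$ to $\det M(X)=c\prod_i d_i(X)$; but the constant column-operation argument above is shorter and completely self-contained.
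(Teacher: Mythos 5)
Your argument is correct and is essentially the paper's own proof: both choose a constant basis of $\field^m$ extending a basis of $\ker M(t)$, multiply $M(X)$ on the right by the corresponding invertible constant matrix, and extract a factor $(X-t)$ from each of the first $R$ columns to conclude $(X-t)^R\mid\det M(X)$. The only cosmetic difference is that the paper normalizes the change-of-basis matrix to have determinant $1$, whereas you carry the nonzero scalar $\det V$ along explicitly.
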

\begin{proof}
Let $t\in\field$ be an arbitrary element of the field and consider the matrix $M(t)\in\field^{m\times m}$ and its kernel $\ker(M(t))\le\field^m$ of rank $R=\rank\ker(M(t))$. Let $a^{(1)},\dots,a^{(R)},a^{(R+1)},\dots,a^{(m)}$ be such a basis of $\field^m$ that $a^{(1)},\dots,a^{(R)}$ is the basis of $\ker(M(t))$ and $\det A=1$ where $A=[a^{(1)},\dots,a^{(m)}]$.
\begin{equation*}
M(t)A=\begin{bmatrix}
0&\dots&0&*&\dots&*\\
0&\dots&0&*&\dots&*\\
\vdots&&\vdots&\vdots&&\vdots\\
0&\dots&0&*&\dots&*\\
\end{bmatrix}
\end{equation*}
Thus, the elements of the first $R$ coloumns of the matrix $M(X)A$ are polynomials vanishing on $X=t$, thus the linear polynomial $(X-t)$ divides them. So $\det(M(X))=\det(M(X)A)=(X-t)^R\cdot f(X)$ where $f(X)\in\field[X]$.
\end{proof}

\begin{lemma}\label{lem:dual}
Suppose that $h(i_1,\dots,i_r)\neq0$ for all $\{i_1,\dots,i_r\}$ and suppose that $h(i,n)=0:\forall i<n$. The collection $\{H(t)^\bot\;|\;t\in\field\}$ of subspaces perpendicular to the subspaces co-ordinatized by the Pl{\"u}cker coordinates $H_{i_1,\dots,i_r}(t)=h(i_1,\dots,i_r)\cdot t^{i_1+\dots+i_r-\binom{r}{2}}$ is an $s$-uniform strong $(r,s\cdot r)$ subspace desing.

Moreover, in this case, the collection $\{H(t)\;|\;t\in\field\}$ of subspaces co-ordinatized by the Pl{\"u}cker coordinates $H_{i_1,\dots,i_r}(t)=h(i_1,\dots,i_r)\cdot t^{i_1+\dots+i_r-\binom{r}{2}}$ is an $r$-uniform strong $(s,r\cdot s)$ subspace desing.
\end{lemma}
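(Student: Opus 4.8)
The plan is to establish the first assertion directly, using the polynomial matrix $M(X)$ constructed above, and then to deduce the ``moreover'' part from Theorem~\ref{thm:strongdual}. Indeed the two collections are orthogonal complements of one another in $\field^{r+s}\equiv(\field^{r+s})^*$, and since $(r+s)-r=s$, an $s$-uniform strong $(r,r\cdot s)$ subspace design consisting of rank-$s$ subspaces dualises, by Theorem~\ref{thm:strongdual} with $m=r+s$, to an $r$-uniform strong $(s,r\cdot s)$ subspace design; so it suffices to prove the first claim.

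First I would fix an arbitrary rank-$r$ subspace of $(\field^{r+s})^*$ and write it as $W^\bot$ for a rank-$s$ subspace $W\le\field^{r+s}$; every rank-$r$ subspace arises this way. Take a basis $b(1),\dots,b(r)$ of $W^\bot$ in echelon form, so $\deg P_{b(j)}(X)\le d-j+1$ with $d=r+s-1$; the hypothesis $h(i,n)=0$ for $i<n$ makes every $P_{b(j)}^{[n]}(X)$ an honest polynomial of degree at most $d+1-j-n$, so $M(X)$ has polynomial entries and $\det M(X)\in\field[X]$, as already noted in the text. Two facts about $M$ are needed. (i) \emph{Degree:} by the Leibniz expansion of the determinant over bijections $\{0,\dots,k\}\to\{1,\dots,r\}$, and using $\sum_{n=0}^{k}(d+1-n)=r(d+1)-\binom{r}{2}$, $\sum_{j=1}^{r}j=\binom{r+1}{2}$ and $\binom{r}{2}+\binom{r+1}{2}=r^2$, one gets $\deg\det M(X)\le r(d+1)-r^2=r\bigl((d+1)-r\bigr)=r\cdot s$. (ii) \emph{Local rank:} identifying $W^\bot$ with $\field^r$ via $\sum_jv_jb(j)\mapsto(v_1,\dots,v_r)$, the subspace $H(t)^\bot\cap W^\bot$ corresponds exactly to $\ker M(t)$, because $b\perp a^{[n]}(t)$ for all $n$ iff $P_b^{[n]}(t)=0$ for all $n$ iff $M(t)v=0$; hence $\rank\bigl(H(t)^\bot\cap W^\bot\bigr)=\rank\ker M(t)$ for every $t\in\field$.

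The main obstacle is to rule out $\det M(X)\equiv0$, since Lemma~\ref{lem:matrix} is vacuous for the zero polynomial. Here I would argue by contradiction: if $\det M(X)\equiv0$, then $\ker M(t)\neq\{0\}$, hence $H(t)^\bot\cap W^\bot\neq\{0\}$, for every $t\in\field$. Using the identity $\rank\bigl(H(t)^\bot\cap W^\bot\bigr)=(r+s)-\rank\bigl(H(t)\vee W\bigr)=\rank\bigl(H(t)\cap W\bigr)$ (the computation from the proof of Theorem~\ref{thm:strongdual}), this would mean that $W$ meets every $H(t)$ non-trivially. But $h(i_1,\dots,i_r)\neq0$ for all $r$-tuples, so by Corollary~\ref{cor:ind} (which uses $|\field|>r\cdot s$) the family $\{H(t)\mid t\in\field\}$ is an $r$-uniform weak $(s,r\cdot s)$ subspace design, whence at most $r\cdot s<|\field|$ of the $H(t)$ meet $W$ non-trivially --- a contradiction. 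Thus $\det M(X)$ is a nonzero polynomial of degree at most $r\cdot s$.

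Finally I would assemble the estimate. For each $t$ with $\det M(t)=0$, Lemma~\ref{lem:matrix} gives $\mathrm{mult}_t(\det M)\ge\rank\ker M(t)=\rank\bigl(H(t)^\bot\cap W^\bot\bigr)$, while for $t$ with $\det M(t)\neq0$ this rank is $0$; since the sum of the multiplicities of the roots of the nonzero polynomial $\det M(X)$ is at most its degree,
\[
\sum_{t\in\field}\rank\bigl(H(t)^\bot\cap W^\bot\bigr)\ \le\ \deg\det M(X)\ \le\ r\cdot s .
\]
As $W^\bot$ ranges over all rank-$r$ subspaces and each $H(t)^\bot$ has rank $s$, this is precisely the statement that $\{H(t)^\bot\mid t\in\field\}$ is an $s$-uniform strong $(r,r\cdot s)$ subspace design. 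Applying Theorem~\ref{thm:strongdual} to this design (with $m=r+s$ and $(r,r\cdot s)$ playing the role of $(s,A)$) then yields the ``moreover'' statement: its dual $\{H(t)\mid t\in\field\}$ is an $r$-uniform strong $(s,r\cdot s)$ subspace design.
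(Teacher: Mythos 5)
Your proposal is correct and follows essentially the same route as the paper: it rules out $\det M(X)\equiv0$ via Corollary~\ref{cor:ind} (the paper phrases this through Theorem~\ref{thm:weakdual} on the dual side, but it is the same argument), bounds $\deg\det M(X)$ by $r\cdot s$ from the echelon-form degrees, applies Lemma~\ref{lem:matrix} to sum the local ranks, and derives the ``moreover'' part from Theorem~\ref{thm:strongdual}. Your write-up is in fact slightly more careful about the identification $\rank(H(t)^\bot\cap W^\bot)=\rank\ker M(t)$ and about the index bookkeeping in the degree estimate, where the paper's own computation contains a harmless $r$-versus-$s$ slip.
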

\begin{proof}
Using Lemma~\ref{lem:matrix} above we can see that
\begin{equation*}
\sum_{t\in\field}\rank(H(t)^{\bot}\cap W^{\bot})\le\sum_{t\in\field}\mult(\det M(X),t)\le\deg\det M(X)
\end{equation*}
if $\det M(X)\neq0$ (if it is not the zero polynomial).

Since $h(i_1,\dots,i_r)\neq0$ for all $\{i_1,\dots,i_r\}$, thus, Corollary~\ref{cor:ind} says that $\{H(t)\;|\;t\in\field\}$ is an $r$-uniform weak $(s, r\cdot s)$ subspace design. Theorem~\ref{thm:weakdual} says that in this case $\{H(t)^\bot\;|\;t\in\field\}$ is an $s$-uniform weak $(r,s\cdot r)$ subspace design. So, for each subspace $W$ of rank $s$, the subspace $W^\bot$ of rank $r$ does \emph{not} block all the subspaces $H(t)^\bot$, thus, the polynomial $\det M(X)$ cannot be zero for all substitution $X=t$.

Remember that $\deg P_{b(j)}\le d-j+1$, and we know that for arbitrary polynomial $P(X)$ the degree $\deg P^{[i]}(X)=\deg P(X)-i$. Since $\det M(X)=\sum_{\sigma\in S_s}(-1)^{I(\sigma)}\prod_{j=1}^sP_{b(\sigma j)}^{[j-1]}(X)$, $\deg\det M(X)\le\sum_{j=1}^s(d-\sigma j+1-(j-1))=s\cdot d-\sum_{j=1}^s(\sigma j)-\sum_{j=1}^sj=s\cdot d-2\binom{s}{2}=s\cdot(d-s+1)$.

Thus, 
\begin{equation*}
\sum_{t\in\field}\rank(H(t)^{\bot}\cap W^{\bot})\le\deg\det M(X)\le
s\cdot(d-s+1)
\end{equation*}

So the collection $\{H(t)^{\bot}:t\in\field\}$ of subspaces in $(\field^{s+r})^*$ is an $s$-uniform $(r,s\cdot r)$ strong subspace design, where $r=d-s+1$.

The last statement comes directly from Theorem~\ref{thm:strongdual}.
\end{proof}

Finally, we consider the particular constructions, at first the most basic construction, the tangents of the moment curve.

\subsection{Tangents of the moment curve}\label{subs:tang}

In this subsection we suppose that the the characteristic of the field $\field$ is bigger than $r+s$, since the derivates could vanish otherwise, making errors in the proofs.

Let $h(i,n)=\frac{i!}{(i-n)!}$ if $i\ge n$, and let $h(i,n)=0$ if $i<n$. Then $a^{[n]}(t)$ is the $n$-th derivate of $a(t)$ as a $(d+1)$-tuple of polynomials of variable $t$. The subspace $H(t)$ co-ordinatized by the Pl{\"u}cker coordinate vector $\mathbf{H}(t)=a^{[0]}(t)\wedge a^{[1]}(t)\wedge\dots\wedge a^{[k]}(t)$ is the `tangent subspace of rank $r$' of the moment curve.

The dual construction $\{H(t)^\bot\;|\;t\in\field\}$ is exactly the basic construction of {Guruswami and Kopparty}~\cite[Subsection~5.1]{VGSK} based on multiplicity codes. Thus, Theorem~\ref{thm:GKimp} says that $\{H(t)^\bot\;|\;t\in\field\}$ is an $s$-uniform strong $(r,r\cdot s)$ subspace design.

Thus, according to Theorem~\ref{thm:strongdual}, $\{H(t)\;|\;t\in\field\}$ is an $r$-uniform strong $(s,r\cdot s)$ subspace design. And thus, it is also an $r$-uniform weak $(s,r\cdot s)$ subspace design. So, we have seen that $h(i_1,\dots,i_r)\neq0$ for each $r$-tuple $i_1,\dots,i_r$ if the characteristic of the field $\field$ is big enough. The following constructions are made to eliminate the problem of small characteristics, so, from now on, the characteristic of the field $\field$ is again arbitrary.

\subsection{Diverted tangents of the moment curve}\label{subs:div}

In our previous article~\cite{higpig}, we solve the problem of small characteristics by `diverting' the tangent lines of the moment curve. The `almost generalization' of this idea is the following. (Almost, because the case $r=2$ is not exactly the same that the `diverted tangent lines' in that article, but the technique is very similar.)

Let $\omega\in\field\setminus\{0\}$ be a suitable element and let $h(i,n)=(\omega^n)^{i-r}$ if $i\ge r$ or if $i=n<r$, and let $h(i,n)=0$ otherwise. So, the elements $h(i_1,\dots,i_r)$ are the $r\times r$ subdeterminants of the following $r\times d$ matrix.
\begin{equation*}
\begin{bmatrix}
1&0&0&\dots&0&0&1&1&\dots&1\\
0&\omega^{1-r}&0&\dots&0&0&1&\omega^1&\dots&\omega^{d-r}\\
0&0&\omega^{2(2-r)}&\dots&0&0&1&\omega^2&\dots&\omega^{2(d-r)}\vphantom{\Big|}\\
\vdots&\vdots&\vdots&\ddots&\vdots&\vdots&\vdots&\vdots&&\vdots\\
0&0&0&\dots&\omega^{(r-2)(-2)}&0&1&\omega^{r-2}&\dots&\omega^{(r-2)(d-r)}\\
0&0&0&\dots&0&\omega^{(r-1)(-1)}&1&\omega^{r-1}&\dots&\omega^{(r-1)(d-r)}\\
\end{bmatrix}
\end{equation*}
Consider the $r$-tuple of indices $0\le i_1<\dots<i_r\le d$ where $i_R\le r-1<r\le i_{R+1}$ and let $0\le j_1<\dots<j_{r-R}\le r-1$ denote the integers such that $\{i_1,\dots,i_R,j_1,\dots,j_{r-R}\}=\{0,1,\dots,r-1\}$. Using these notations one can easily see that
\begin{align*}
h(i_1,\dots,i_r)&=\pm\omega^{i_1(i_1-r)+\dots+i_R(i_R-r)}\cdot\det\Omega&\mbox{ where }&
&\Omega_{k\ell}&=\omega^{j_{\ell}\cdot(-r+i_{R+k})}.
\end{align*}
In particular, $h(0,1,\dots,r-1)=\omega^{0+(1-r)+(4-2r)+\dots+(1-r)}\neq0$. If $i_1\ge r$ then $h(i_1,\dots,i_r)=V(\omega^{i_1-r},\dots,\omega^{i_r-r})$ and if $i_1=r-1$ then $h(i_1,\dots,i_r)=\pm\omega^{1-r}\cdot V(\omega^{i_2-r},\dots,\omega^{i_r-r})$, where $V(a_1,\dots,a_N)$ denotes the Vandermonde determinant, which is nonzero if and only if the elements $a_1,\dots,a_N$ are distinct. So $\omega$ has to be an element of order more than $d-r=s-1$.

There is a little problem with the numbers $h(i_1,\dots,i_r)$ if some of the indices are less than $r-1$ and some of them are bigger. The determinant of $\Omega$ is a \emph{generalized Vandermonde determinant}.

\begin{definition}
Let $0\le j_1<j_2<\dots<j_N$ be a strictly increasing series of non-negative integers and let $a_1,\dots,a_N$ be elements of the field $\field$. The \emph{generalized Vandermonde} matrix and determinant is defined as the matrix $V$ of entries $V_{k\ell}=a_k^{j_\ell}$ and its determinant, respectively. If $j_i=i-1$ then they are the well known Vandermonde matrix and determinant.
\end{definition}

We have a very special case of \emph{generalized Vandermonde} matrices here, where $a_k=\omega^{-r+i_{R+k}}$.

\begin{remark}
The generalized Vandermonde determinant is totally positive over $\mathbb{R}$ if $a_1<\dots<a_r$ are distinct positive elements of $\mathbb{R}$. So, if $\mathbb{Q}\subset\field$ (that is, if $\karak\field=0$) then $\omega\in\mathbb{Q}$, $\omega>1$ is a suitable element. But, if $\karak\field=p\neq0$ then  a generalized Vandermonde determinant of distinct elements can be zero.
\end{remark}

Since the generalized Vandermonde determinant is an alternating multivariate polynomial of the variables $a_1,\dots,a_N$, it is the product of the Vandermonde determinant $V(a_1,\dots,a_N)$ and a symmetric polynomial of these variables.

Let $N$ denote $r-R$ and let $b_k=i_{R+k}-r$. Our generalized Vandermonde determinant
\begin{equation*}
\det\Omega=\sum_{\sigma\in S_N}(-1)^{I(\sigma)}\prod_{k=1}^N\omega^{(-r+i_{R+k})\cdot j_{\sigma k}}
=\sum_{\sigma\in S_N}(-1)^{I(\sigma)}\omega^{\sum_{k=1}^N b_k\cdot j_{\sigma k}}
\end{equation*}
which is a univariate polynomial of $\omega$ and the polynomial
$\det\Omega$ is divisible by the Vandermonde determinant
\begin{equation*}
V(\omega^{b_1},\dots,\omega^{b_N})=\prod_{i<j}(\omega^{b_j}-\omega^{b_i})
\end{equation*}
which is also a univariate polynomial of $\omega$. The following lemma gives degrees of these polynomials.
\begin{lemma}
Let $0\le j_1<j_2<\dots<j_N\le r-1$ an let $0\le b_1<b_2<\dots<b_N\le d-r$ be integers and for each permutation $\sigma\in S_N$ let $\Sigma(\sigma)$ denote the sum
\begin{equation*}
\Sigma(\sigma)=\sum_{k=1}^{N}b_k\cdot j_{\sigma k}
\end{equation*}
Using these notations, for each non-identical permutation $\sigma\neq\ident:\Sigma(\sigma)<\Sigma(\ident)\le N\cdot(d-r)\cdot(r-1)-\binom{N}{2}\cdot\frac{d}{3}$. Moreover, $\max\Sigma(\sigma)-\min\Sigma(\sigma)\le\frac{N(d-r)(r-1)}{2}$.
\end{lemma}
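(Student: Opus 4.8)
The plan is to read the whole lemma as an exercise on the rearrangement inequality for the two strictly increasing finite sequences $(b_k)_{k=1}^N$ and $(j_k)_{k=1}^N$, handling the three assertions in turn.

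For the first assertion, $\Sigma(\sigma)<\Sigma(\ident)$ whenever $\sigma\neq\ident$, I would run the usual bubble-sort argument. If $\sigma$ is not the identity there are adjacent positions $k,k+1$ with $\sigma k>\sigma(k+1)$, and transposing the two values at these positions changes $\Sigma$ by $(b_k-b_{k+1})(j_{\sigma(k+1)}-j_{\sigma k})$, which is strictly positive because $b_k<b_{k+1}$ and $j_{\sigma(k+1)}<j_{\sigma k}$. Iterating such swaps turns $\sigma$ into $\ident$ while strictly increasing $\Sigma$ at every step, so $\ident$ is the unique maximizer — slightly more than what is claimed.

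For the second assertion, $\Sigma(\ident)\le N(d-r)(r-1)-\binom N2\tfrac d3$, I would note that $\Sigma(\ident)=\sum_{k=1}^N b_kj_k$ has only nonnegative terms, so over all admissible choices of the two sequences it is maximized by pushing entries to the top of their ranges, namely $b_k=d-r-(N-k)$ and $j_k=r-1-(N-k)$. These are legitimate since the hypotheses already force $N\le r$ and $N\le d-r+1$ (each sequence is $N$ distinct integers inside a set of size $r$, resp.\ $d-r+1$), so the chosen entries are nonnegative and stay in range. Substituting $m=N-k$ turns the maximum into $\sum_{m=0}^{N-1}(d-r-m)(r-1-m)=N(d-r)(r-1)-(d-1)\binom N2+\tfrac16(N-1)N(2N-1)$; cancelling the common term and then $\binom N2=\tfrac12N(N-1)$, the inequality to prove reduces to $2N-1\le 2d-3$, i.e.\ $N\le d-1$. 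I would close this by observing that $N\le r$ together with $N\le d-r+1$ gives $2N\le d+1$, and that $N\ge2$ forces (via the same two bounds) $r\ge2$ and $d\ge r+1\ge3$, whence $\tfrac{d+1}2\le d-1$; while for $N\le1$ the term $\binom N2$ vanishes and the bound is trivial.

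For the third assertion, $\max_\sigma\Sigma(\sigma)-\min_\sigma\Sigma(\sigma)\le\tfrac12 N(d-r)(r-1)$, rearrangement also places the minimum at the reversal permutation $w_0(k)=N+1-k$, so the quantity equals $\sum_{k=1}^N b_k(j_k-j_{N+1-k})$. I would pair the summand indexed $k$ with the one indexed $N+1-k$ (the central summand, if $N$ is odd, being zero): each pair equals $(b_{N+1-k}-b_k)(j_{N+1-k}-j_k)$, which is at most $(d-r)(r-1)$ since $0\le b_1$, $b_N\le d-r$, $0\le j_1$ and $j_N\le r-1$; as there are at most $N/2$ pairs, the bound follows. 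The only genuinely fussy step is the middle one — checking admissibility of the extremal sequences, carrying out the closed-form summation, and verifying that it really reduces to $N\le d-1$, which is exactly where the somewhat unusual constant $d/3$ is pinned down; the first and third parts are immediate once the rearrangement inequality is in hand.
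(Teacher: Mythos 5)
Your proof is correct and follows essentially the same route as the paper's: the adjacent-transposition rearrangement argument for the strict inequality $\Sigma(\sigma)<\Sigma(\ident)$ and for locating the minimum at the reversal, the extremal sequences $b_k=d-r-(N-k)$, $j_k=r-1-(N-k)$ with the same closed-form summation for the middle bound, and the symmetric pairing that the paper writes as $2\bigl(\Sigma(\ident)-\Sigma(\opp)\bigr)=\sum_{k=1}^{N}(j_k-j_{N+1-k})(b_k-b_{N+1-k})\le N(r-1)(d-r)$. The only divergence is minor: where the paper justifies the key reduction $N\le d-1$ by the contextual relation $N+1=r-R+1\le d$ inherited from the application, you derive it self-containedly from the lemma's own hypotheses via $2N\le r+(d-r+1)=d+1$ together with $d\ge3$ when $N\ge2$ (and triviality when $N\le1$), which is if anything cleaner.
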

\begin{proof}
The strict inequality $\Sigma(\sigma)<\Sigma(\ident)$ comes by induction from the following fact. Suppose that $\sigma k>\sigma(k+1)$. (Such a $k$ exists if and only if $\sigma\neq\ident$.) Then let $\sigma'\ell=\sigma\ell$ for each $\ell\neq k,\ell\neq(k+1)$, and let $\sigma'k=\sigma(k+1)<\sigma k=\sigma'(k+1)$. Then $\Sigma(\sigma')-\Sigma(\sigma)=b_k\cdot j_{\sigma(k+1)}+b_{k+1}\cdot j_{\sigma k}-b_k\cdot j_{\sigma k}+b_{k+1}\cdot j_{\sigma(k+1)}=(b_{k+1}-b_k)\cdot(j_{\sigma k}-j_{\sigma(k+1)})>0\cdot0$.
By induction in the number of inversions we get the result. The same
induction shows that
$\min\Sigma(\sigma)=\Sigma(\opp)=\sum_{k=1}^{N}b_k\cdot j_{N+1-k}$
where $\opp\in S_N$ denotes the opposite permutation.

\paragraph{First bound}
$\Sigma(\ident)=\sum_{k=1}^N b_k\cdot
j_k\le\sum_{k=0}^{N-1}(d-r-k)(r-1-k)=\sum_{k=0}^{N-1}\big((d-r)(r-1)-k(d-1)+k^2\big)=
N(d-r)(r-1)-\binom{N}{2}(d-1)+\frac{(N-1)N(2N-1)}{6}=N(d-r)(r-1)+\binom{N}{2}(\frac{2N-1}{3}-d+1)=N(d-r)(r-1)-\binom{N}{2}\frac{3d-2(N+1)}{3}$.
Since $N+1=r-R+1\le d$, the first bound comes.

\paragraph{Second bound}
$2\big(\Sigma(\ident)-\Sigma(\opp)\big)=\sum_{k=1}^{N}j_k\cdot(b_k-b_{N+1-k})+\sum_{k=1}^N j_{N+1-k}\cdot(b_{N+1-k}-b_k)=\sum_{k=1}^{N}(j_k-j_{N+1-k})(b_k-b_{N+1-k})\le\sum_{k=1}^N(r-1)(d-r)=N(r-1)(d-r)$.
\end{proof}

\begin{corollary}\label{cor:deg}
Since $\Sigma(\sigma)<\Sigma(\ident)$ if $\sigma\neq\ident$, the leading term of the univariate polynomial $\det\Omega$ is $\omega^{\Sigma(\ident)}$, thus, this polynomial is \emph{not the zero polynomial} and its degree is $\max\Sigma(\sigma)\le N\cdot(d-r)\cdot(r-1)-\binom{N}{2}\cdot\frac{d}{3}$, moreover, $\det\Omega$ is the product of $\omega^{\Sigma(\opp)}$ and a polynomial of degree at most $\max\Sigma(\sigma)-\min\Sigma(\sigma)\le\frac{N(d-r)(r-1)}{2}\le\binom{r}{2}(d-r)$.

Since $N\le r$, the polynomial $\det\Omega$ has at most $\binom{r}{2}(d-r)$ nonzero roots. Moreover, since the coefficients of $\det\Omega$ are the sums of $\pm1$, its each root is in the extension field of the prime field $\field_p$ of degree at most $\binom{r}{2}(d-r)$, so their order is at most $p^{\binom{r}{2}(d-r)}-1$.\qed
\end{corollary}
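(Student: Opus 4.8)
The plan is to read all the claims straight off the explicit expansion
\[
\det\Omega=\sum_{\sigma\in S_N}(-1)^{I(\sigma)}\,\omega^{\Sigma(\sigma)},\qquad \Sigma(\sigma)=\sum_{k=1}^{N}b_k\,j_{\sigma k},
\]
combined with the preceding lemma. That lemma gives $\Sigma(\sigma)<\Sigma(\ident)$ for every $\sigma\neq\ident$, and the very same inversion-counting induction, run from the other end, gives $\Sigma(\sigma)>\Sigma(\opp)$ for every $\sigma\neq\opp$ (this is the observation $\min\Sigma(\sigma)=\Sigma(\opp)$ already recorded in the proof of the lemma). Hence, among all the exponents that occur in $\det\Omega$, the value $\Sigma(\ident)$ is attained by $\sigma=\ident$ alone, so the coefficient of $\omega^{\Sigma(\ident)}$ is exactly $(-1)^{I(\ident)}=1$; in particular $\det\Omega$ is not the zero polynomial and $\deg\det\Omega=\Sigma(\ident)=\max_\sigma\Sigma(\sigma)$. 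Likewise $\Sigma(\opp)$ is the unique smallest exponent, so $\omega^{\Sigma(\opp)}$ divides $\det\Omega$ and the quotient $g(\omega):=\det\Omega/\omega^{\Sigma(\opp)}$ is a polynomial of degree $\Sigma(\ident)-\Sigma(\opp)=\max_\sigma\Sigma(\sigma)-\min_\sigma\Sigma(\sigma)$.

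Next I would substitute the numerical estimates from the lemma, namely $\deg\det\Omega=\Sigma(\ident)\le N(d-r)(r-1)-\binom{N}{2}\frac{d}{3}$ and $\deg g\le\frac{N(d-r)(r-1)}{2}$; since $N=r-R\le r$, the latter is at most $\frac{r(r-1)}{2}(d-r)=\binom{r}{2}(d-r)$. The nonzero roots of $\det\Omega$ are precisely the roots of $g$ (as $g(0)\neq0$), and a nonzero univariate polynomial has no more roots than its degree, so $\det\Omega$ has at most $\binom{r}{2}(d-r)$ nonzero roots.

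For the bound on the multiplicative order of those roots (the case $\karak\field=p>0$), I would use that every coefficient of $\det\Omega$ is, up to sign, a count of permutations $\sigma$ with a prescribed value of $\Sigma(\sigma)$, hence an integer; so $g\in\mathbb{Z}[\omega]$, and its reduction $\bar g\in\field_p[\omega]$ is again nonzero (its leading coefficient is $(-1)^{I(\ident)}=1$) with $\deg\bar g=\deg g\le\binom{r}{2}(d-r)$. If $\omega_0$ is a nonzero root of $\det\Omega$, then $\bar g(\omega_0)=0$, so $\omega_0$ generates an extension $\field_{p^e}\supseteq\field_p$ with $e=[\field_p(\omega_0):\field_p]\le\deg\bar g\le\binom{r}{2}(d-r)$. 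Consequently the order of $\omega_0$ divides $|\field_{p^e}^{\,*}|=p^{e}-1$, hence is at most $p^{\binom{r}{2}(d-r)}-1$.

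I do not expect a genuine obstacle: once the expansion is in hand, everything is an immediate consequence of the preceding lemma together with the classical facts that a nonzero degree-$e$ polynomial over $\field_p$ has all of its roots in extensions of degree $\le e$ and that the order of a field element divides $p^{e}-1$. The only step needing a moment's attention is noticing that the lemma also delivers the \emph{mirror} inequality $\Sigma(\sigma)>\Sigma(\opp)$ for $\sigma\neq\opp$, which is exactly what makes the lowest term survive cancellation and pins down $\deg g$; this, however, is the same induction on the number of inversions, so no new work is required.
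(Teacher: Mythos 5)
Your argument is correct and is exactly the intended unpacking of the paper's unproved corollary: read everything off the expansion $\det\Omega=\sum_{\sigma}(-1)^{I(\sigma)}\omega^{\Sigma(\sigma)}$, use the strict extremality of $\Sigma(\ident)$ and $\Sigma(\opp)$ from the preceding lemma to get the nonzero leading and lowest coefficients, and then apply the standard facts about root counts and about roots of polynomials over $\field_p$ lying in extensions of degree bounded by the degree. Your one added observation --- that the same inversion-swapping induction gives the mirror strict inequality $\Sigma(\sigma)>\Sigma(\opp)$ for $\sigma\neq\opp$ --- is precisely what the paper's lemma proof already records via $\min\Sigma(\sigma)=\Sigma(\opp)$, so there is no genuine divergence.
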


Thus, the requirements $h(i_1,\dots,i_r)\neq0$ are polynomial conditions for the element $\omega$. If $\omega$ is \emph{not} the soultion \emph{any} of the equations $h(i_1,\dots,i_r)=0$, then $h(i_1,\dots,i_r)\neq0$ for each $r$-tuple $i_1,\dots,i_r$.

\begin{theorem}
Let $r$ and $s$ be given and suppose that $\karak\field=p\neq0$. If $\field$ has more than $\binom{r+s}{r}\binom{r}{2}(s-1)$ elements, or if the field has $q=p^h$ elements and $h>\binom{r}{2}(s-1)$, then there exists an $r$-uniform strong $(s,r\cdot s)$ subspace design, and an $s$-uniform strong $(r,s\cdot r)$ subspace design in the vector space $\field^{r+s}$.
\end{theorem}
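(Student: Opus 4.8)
The plan is to use the ``diverted tangents'' construction introduced in this subsection, for a suitably chosen $\omega\in\field\setminus\{0\}$, and to observe that the whole problem collapses to producing one good value of $\omega$. Recall that this construction automatically satisfies $h(i,n)=0$ whenever $i<n$: if $i<n$ then, since $n\le r-1<r$, we have $i<r$ and $i\neq n$, so $h(i,n)=0$ by definition. Hence, as soon as we can choose $\omega$ so that $h(i_1,\dots,i_r)\neq0$ for every $r$-subset $\{i_1,\dots,i_r\}\subseteq\{0,1,\dots,r+s-1\}$, Lemma~\ref{lem:dual} applies verbatim and yields \emph{simultaneously} that $\{H(t)\mid t\in\field\}$ is an $r$-uniform strong $(s,r\cdot s)$ subspace design and that $\{H(t)^{\bot}\mid t\in\field\}$ is an $s$-uniform strong $(r,s\cdot r)$ subspace design. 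One must also check the side hypothesis $|\field|>r\cdot s$ of Corollary~\ref{cor:ind}, on which Lemma~\ref{lem:dual} rests; this is immediate under either assumption of the theorem, since for $r,s\ge 2$ one has $\binom{r+s}{r}\binom{r}{2}(s-1)\ge r\cdot s$ and $p^{h}\ge p^{\binom{r}{2}(s-1)+1}\ge p^{2}>r\cdot s$.

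For the first hypothesis, recall from the discussion preceding Corollary~\ref{cor:deg} that, up to a nonzero power of $\omega$, each quantity $h(i_1,\dots,i_r)$ equals a generalized Vandermonde determinant $\det\Omega$ viewed as a univariate polynomial in $\omega$, and by Corollary~\ref{cor:deg} this is \emph{not} the zero polynomial and has degree at most $\binom{r}{2}(d-r)=\binom{r}{2}(s-1)$. There are $\binom{r+s}{r}$ such conditions, one per $r$-subset of $\{0,\dots,r+s-1\}$, so their product is a nonzero polynomial in $\omega$ of degree at most $\binom{r+s}{r}\binom{r}{2}(s-1)$, hence has at most that many roots. If $|\field|>\binom{r+s}{r}\binom{r}{2}(s-1)$, we may pick $\omega\in\field\setminus\{0\}$ avoiding all of them; for this $\omega$ every $h(i_1,\dots,i_r)$ is nonzero, and the first paragraph finishes the argument. (There is ample slack for the exclusion of $\omega=0$, because the subset $\{0,1,\dots,r-1\}$ contributes only a monomial $\pm\omega^{\sum_{i=0}^{r-1}i(i-r)}$ with no nonzero root.)

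For the second hypothesis, let $\field=\field_q$ with $q=p^{h}$ and $h>\binom{r}{2}(s-1)$, and let $\omega$ be a generator of the cyclic group $\field_q^{\times}$, so $\mathrm{ord}(\omega)=p^{h}-1$. By the last assertion of Corollary~\ref{cor:deg}, every root of every polynomial $\det\Omega$ lies in an extension of the prime field $\field_p$ of degree at most $\binom{r}{2}(s-1)$, hence has multiplicative order at most $p^{\binom{r}{2}(s-1)}-1<p^{h}-1=\mathrm{ord}(\omega)$. Therefore $\omega$ is a root of none of these polynomials, so once more $h(i_1,\dots,i_r)\neq0$ for every $r$-subset, and the conclusion follows as above.

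The whole argument is essentially bookkeeping on top of two results already established: Corollary~\ref{cor:deg}, which supplies the crucial facts that the generalized Vandermonde polynomials are nonzero with controlled degree and that their roots have controlled multiplicative order, and Lemma~\ref{lem:dual}, which converts the statement ``$h(i_1,\dots,i_r)\neq0$ for all $r$-subsets'' into the two strong-design conclusions. Accordingly, I do not expect a genuine obstacle; the only points demanding care are (i) checking that the built-in side conditions of Lemma~\ref{lem:dual} hold for this construction (the vanishing $h(i,n)=0$ for $i<n$, and the hidden requirement $|\field|>r\cdot s$), and (ii) being slightly careful with the counting so that discarding $\omega=0$ does not violate the stated threshold ``more than $\binom{r+s}{r}\binom{r}{2}(s-1)$'' — which it does not, as noted above.
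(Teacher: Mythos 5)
Your proof follows the paper's own argument essentially verbatim: both hypotheses serve only to produce an $\omega$ avoiding every root of the nonzero polynomials $\det\Omega$ of degree at most $\binom{r}{2}(s-1)$ — by counting the total number of roots under the first hypothesis, and by the order bound of Corollary~\ref{cor:deg} under the second — after which Lemma~\ref{lem:dual} yields both design conclusions exactly as in the paper. Your added verification of the hidden hypotheses of Lemma~\ref{lem:dual} (that $h(i,n)=0$ for $i<n$, and that $|\field|>r\cdot s$) is a genuine improvement over the paper, which checks neither; the only blemish is that the chain $p^{h}\ge p^{2}>r\cdot s$ fails in the single degenerate case $p=2$, $r=s=2$, $h=2$ (where $q=4=r\cdot s$), an edge case that the paper's own proof silently suffers from as well.
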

\begin{proof}
Let $d=r+s-1$ and suppose that $\field$ has more than $p^{\binom{r}{2}(d-r)}$ elements. If $\field=\field_q$, where $q=p^h$, $h>\binom{r}{2}(d-r)$, then the order of the primitive element $\omega\in\field_q$ is $p^h-1>p^{\binom{r}{2}(d-r)}-1$. If $\field$ is infinite then it has element $\omega$ of order more than $p^{\binom{r}{2}(d-r)}-1$. Corollary~\ref{cor:deg} above yields that if the order of $\omega$ is bigger than $p^{\binom{r}{2}(d-r)}-1$, then $\omega$ cannot be the solution of \emph{any} of the equations $h(i_1,\dots,i_r)=0$.

Suppose that $\field$ has more than $\binom{d+1}{r}\binom{r}{2}(d-r)$ elements. Then the number of distinct roots of all the polynomials $\det\Omega$ (for all $r$-tuples) is smaller than $|\field|$ so $\exists\omega\in\field$ such that $\omega$ is \emph{not} the solution of \emph{any} of the equations $h(i_1,\dots,i_r)=0$.

If we choose the element $\omega$ properly, Lemma~\ref{lem:dual} says that the set of diverted tangents of the moment curve is an $r$-uniform strong $(s,r\cdot s)$ subspace design, and the orthogonal complementary subspaces of the diverted tangents constitutes an $s$-uniform strong $(r,s\cdot r)$ subspace design.
\end{proof}

If the characteristic $p$ of the field $\field_q$ is smaller than $r+s$ and $q<\binom{r+s}{r}\binom{r}{2}(s-1)$ and $q=p^h$ and $h\le\binom{r}{2}(s-1)$ then the theorem above does not work. In this case we have to use another construction yielding strong subspace designs of parameter $A$ bigger than $r\cdot s$, but we will see that this parameter $A$ is $r\cdot s$ if we consider the subspace design as a \emph{weak} subspace design.

\subsection{Secants of the moment curve}\label{subs:sec}

Consider the diverted tangents of the moment curve. Since $V(\omega^{i_1},\dots,\omega^{i_r})\cdot t^{i_1+\dots+i_r-\binom{r}{2}}$ is not meaningless if some index $i_k<r$, we can extend the definition of $H_{i_1,\dots,i_r}(t)=V(\omega^{i_1},\dots,\omega^{i_r})\cdot t^{i_1+\dots+i_r-\binom{r}{2}}$ to the all $r$-tuples of indices. In this case $h(i,n)=(\omega^n)^i$ for all $i$ and $n$.

Since for $t\neq0$ $H(t)$ is the subspace containing the points of the moment curve $a(t),a(\omega t),a(\omega^2 t),\dots,a(\omega^{r-1}t)$ (remember that the vector $\frac{1}{t^{n}}a(\omega^{n} t)$ and the vector $a(\omega^{n} t)$ co-ordinatized the same projective point if $t\neq0$) these subspaces will be called the $\omega$-secants of the moment curve.

Since $i_r\le d=r+s-1$, $V(\omega^{i_1},\dots,\omega^{i_r})\neq0$ if (and only if) the order of $\omega$ is at least $r+s$. Suppose that $|\field|>r+s$ and let $\omega\in\field$ such a suitable element. Corollary~\ref{cor:ind} of Lemma~\ref{lem:ind} says that the set $\{H(t)\;|\;t\in\field\}$ of the $\omega$-secants of the moment curve is an $r$-uniform weak $(s,r\cdot s)$ subspace design.

Consider the orthogonal complementary subspaces of the $\omega$-secants. It is exactly the basic construction of {Guruswami and Kopparty}~\cite[Subsection~4.1]{VGSK} based on Reed--Solomon codes and Theorem~\ref{thm:GKimp} says that $\{H(t)^\bot\;|\;t\in\field\}$ is an $s$-uniform strong $(r,s\cdot r+\binom{r}{2})$ subspace design. Thus, according to Theorem~\ref{thm:strongdual}, the set $\{H(t)\;|\;t\in\field\}$ of the $\omega$-secants of the moment curve is an $r$-uniform strong $(s,r\cdot s+\binom{r}{2})$ subspace design.

According to Theorem~\ref{thm:weakdual}, Guruswami--Kopparty construction $\{H(t)^\bot\;|\;t\in\field\}$ based on Reed--Solomon codes is an $s$-uniform weak $(r,r\cdot s)$ subspace design.


\section*{Summary}

Let the natural numbers $r\ge2$ and $s\ge2$ be given and let $\field$ be an arbitrary field of more than $r+s$ elements. Then the constructions above prove that there exist $r$-uniform strong $(s,r\cdot s+\min\{\binom{r}{2},\binom{s}{2}\})$ subspace designs in $\field^{r+s}$. Moreover, there exist $r$-uniform strong $(s,r\cdot s)$ subspace designs in $\field^{r+s}$
\begin{itemize}
\item if $\karak\field=0$, or
\item if $\karak\field=p>r+s$, or
\item if $|\field|>\binom{r+s}{r}\binom{r}{2}(s-1)$ or
\item if $|\field|>p^{\binom{r}{2}(s-1)}$ where $p=\karak\field$.
\end{itemize}
The constructions also prove that there exist $r$-uniform weak $(s,r\cdot s)$ subspace designs in $\field^{r+s}$, thus, for arbitrary $k\ge1$ and $d\ge3$ there exist $(k\!+\!1)\cdot(d\!-\!k)+1$ projective $k$-subspaces of $\PG(d,\field)$ in higgledy-piggledy arrangement.

\subsection*{Some open question}

The construction of $(k\!+\!1)\cdot(d\!-\!k)+1$ projective $k$-subspaces of $\PG(d,\field)$ in higgledy-piggledy arrangement is the smallest one over algebraically closed field $\field$. Over other fields we have a much smaller lower bound, but we do not know whether there are smaller sets of higgledy-piggledy $k$-subspaces or not. We do not know the tight lower bound over non-closed fields.

We prove that the diverted tangents of the moment curve is a good construction if the field has more than $\binom{r+s}{r}\binom{r}{2}(s-1)$ elements or more than $p^{\binom{r}{2}(s-1)}$ elements where $p=\karak\field$, but we do not know whether this construction works well also over some smaller fields. We conjecture that it does.


\end{document}